\def\A{\mathcal{A}}
\def\a{\alpha}
\def\AA{A^2}
\def\AG{\langle A,G\rangle}
\def\D{\mathcal{D}}
\def\DD{D^2}
\def\eps{\varepsilon}
\def\Hom{\operatorname{Hom}}
\def\Im{\operatorname{Im}}
\def\lk{\operatorname{lk}}
\def\nA{\nabla_{asc}}
\def\nD{\nabla_{des}}
\def\nAD{\nabla_{asc-des}}
\def\S{\Sigma}
\def\s{\operatorname{sign}}
\def\sm{\smallsetminus}
\def\tG{\widetilde{G}}
\def\Z{\mathbb{Z}}
\newcommand\ris[5]{\raisebox{#1mm}{\hspace{#2mm}\includegraphics[width=#3mm]{#4.eps}\hspace{#5mm}}}
\newcommand{\rb}{\raisebox}
\newcommand{\ig}{\includegraphics}
\newcommand\risS[6]{\rb{#1pt}[#5pt][#6pt]{\begin{picture}(#4,15)(0,0)
  \put(0,0){\ig[width=#4pt]{#2.eps}} #3
     \end{picture}}}
\def\kar#1#2#3#4#5#6{\begin{array}{|c|}\hline
\risS{-17}{o3-#1}{\put(12,36){$\scriptstyle #3$}
\put(4,2){$\scriptstyle #4$}\put(37,9){$\scriptstyle #5$}}{39}{27}{20}\\
\hline \risS{-17}{o3-#2}{\put(12,36){$\scriptstyle #3$}
\put(4,2){$\scriptstyle #4$}\put(37,9){$\scriptstyle #5$}}{39}{27}{20} \\
\hline \end{array}}
\newtheorem{thm}{Theorem}[section]
\newtheorem{lem}[thm]{Lemma}
\newtheorem{prop}[thm]{Proposition}
\newtheorem{cor}[thm]{Corollary}
\newtheorem{defn}[thm]{Definition}
\newtheorem{rem}[thm]{Remark}
\newtheorem{ex}[thm]{Example}
\begin{document}

\title{Link invariants via counting surfaces}

\author{Michael Brandenbursky}

\vspace{2cm}

\begin{abstract}
A Gauss diagram is a simple, combinatorial way to present a
knot. It is known that any Vassiliev invariant may be obtained
from a Gauss diagram formula that involves counting (with signs
and multiplicities) subdiagrams of certain combinatorial types.
These formulas generalize the calculation of a linking number by
counting signs of crossings in a link diagram.
Until recently, explicit formulas of this type were known only
for few invariants of low degrees. In this paper we present
simple formulas for an infinite family of invariants in terms
of counting surfaces of a certain genus and number of boundary components in a Gauss diagram.
We then identify the resulting invariants with certain derivatives
of the HOMFLYPT polynomial.
\end{abstract}

\maketitle

\section{Introduction.}

In this paper we consider link invariants arising from the Conway
and HOMFLYPT polynomials. The HOMFLYPT polynomial $P(L)$
is an invariant of an oriented link $L$ (see e.g. \cite{FYHLMO},
\cite{LM}, \cite{PT}). It is a Laurent polynomial in two
variables $a$ and $z$, which satisfies the following skein relation:

\begin{equation}\label{eq:Homfly-skein}
aP\left(\ris{-4}{-1}{10}{L+}{-1.1}\right)-
a^{-1}P\left(\ris{-4}{-1}{10}{L-}{-1.1}\right)=
zP\left(\ris{-4}{-1.1}{10}{L0}{-1.1}\right).
\end{equation}

The HOMFLYPT polynomial is normalized in the following way.
If $O_m$ is an $m$-component unlink, then
$P(O_m)=\left(\frac{a-a^{-1}}{z}\right)^{m-1}$. The Conway
polynomial $\nabla$ may be defined as $\nabla(L):=P(L)|_{a=1}$.
This polynomial is a renormalized version of the Alexander
polynomial (see e.g. \cite{Conway}, \cite{Likorish}). All coefficients
of $\nabla$ are finite type or Vassiliev invariants.

One of the mainstream and simplest techniques for producing
Vassiliev invariants are so-called Gauss diagram formulas (see
\cite{GPV}, \cite{PV}). These formulas generalize the calculation
of a linking number by counting subdiagrams of special
geometric-combinatorial types with signs and weights in a given
link diagram. This technique is also very helpful in the rapidly
developing field of virtual knot theory (see \cite{Ka}), as well
as in $3$-manifold theory (see \cite{MP}).

Until recently, explicit formulas of this type were known only for
few invariants of low degrees. The situation has changed with
works of Chmutov-Khoury-Rossi \cite{CKR} and Chmutov-Polyak
\cite{CP}, see also \cite{KrP} for the case of string links. In \cite{CKR} Chmutov-Kho\-ury-Rossi presented an
infinite family of Gauss diagram formulas for all coefficients of
$\nabla(L)$, where $L$ is a knot or a two-component
link. We explain how each formula for the coefficient
$c_n$ of $z^n$ is related to a certain count of orientable
surfaces of a certain genus, and with one boundary component. The
genus depends only on $n$ and the number of the components of $L$.
These formulas may be viewed as a certain combinatorial analog of
Gromov-Witten invariants.

In this work we generalize the result of Chmutov-Khoury-Rossi
to links with arbitrary number of components. We present a direct
proof of this result, without any prior assumption on the existence
of the Conway polynomial. It enables us to present two different
extensions of the Conway polynomial to long virtual links. We
compare these extensions with the existing versions of the
Alexander and Conway polynomials for virtual links, and show
that they are new. In particular, we give a new proof of the fact
that the famous Kishino knot $K_T$ \cite{Ki} is non-classical, by
calculating these polynomials for $K_T$. Later we show that these formulas
may be modified by counting only certain, so-called irreducible,
subdiagrams.

This leads to a natural question: how to produce link invariants
by counting orientable surfaces with an arbitrary number of
boundary components? In this paper we deal with a model case,
when the number of boundary components is two. We modify
Chmutov-Khoury-Rossi construction and present an infinite
family of Gauss diagram formulas for the coefficients of the first
partial derivative of the HOMFLYPT polynomial, w.r.t. the
variable $a$, evaluated at $a=1$. This family is related, in a
similar way, to the family of orientable surfaces with two
boundary components. Later we modify these formulas in case of knots.

In the forthcoming paper \cite{B} we show that, in case of closed braids, a similar count of
orientable surfaces with $n$ boundary components (up to some
normalization) is related to an infinite family of Gauss diagram
formulas for the coefficients of the $n-1$ derivative, w.r.t.
the variable $a$ in the HOMFLYPT polynomial, evaluated at $a=1$.

\textbf{Acknowledgments.} I would like to thank Michael Polyak, who has introduced me to this subject, guided and helped me a
lot while I was working on this paper. I also would like to thank the referee for careful reading of this paper and for his/her useful comments and remarks.

Part of this work has been done during the author's stay at Max Planck Institute for Mathematics in Bonn. The author wishes to express his gratitude to the Institute for the support and excellent working conditions.


\section{Gauss diagrams and arrow diagrams}
\label{sec-Conway}
In this section we recall a notion of Gauss diagrams, arrow diagrams and Gauss diagram formulas. We then define a special type of arrow diagrams which will be used to define Gauss diagram formulas for coefficients of the Conway polynomial, and for coefficients of some other polynomials derived from the HOMFLYPT polynomial.

\subsection{Gauss diagrams of classical and virtual links}
Gauss diagrams (see e.g. \cite{Fiedler}, \cite{GPV}, \cite{PV}) provide a simple combinatorial way to encode classical and virtual links.

\begin{defn}\rm
Given a classical (possibly framed) link diagram $D$, consider a collection of oriented circles parameterizing it. Unite two preimages of every crossing of $D$ in a pair and connect them by an arrow, pointing from the overpassing preimage to the underpassing one. To each arrow we assign a sign (writhe) of the corresponding crossing. The result is called the \textit{Gauss diagram} $G$ corresponding to $D$.
\end{defn}

We consider Gauss diagrams up to an orientation-preserving diffeomorphisms of the circles.
In figures we will always draw circles of the Gauss diagram with a counter-clockwise orientation.

\begin{ex}\rm
Diagrams of the trefoil knot and the Hopf link, together with the corresponding Gauss diagrams, are shown in the following picture.
\begin{equation*}
\quad\ris{-8}{-3}{40}{trefoil+Gauss}{-1.1}\quad\quad\quad\ris{-6.5}{-3}{74}{Hopf+Gauss}{-1.1}
\end{equation*}
\end{ex}

A classical link can be uniquely reconstructed from the corresponding Gauss diagram \cite{GPV}.  Many fundamental knot invariants, such as the knot group and the Alexander polynomial, may be easily obtained from the Gauss diagram. We are going to work with \textit{based Gauss diagrams}, i.e. Gauss diagrams with a base point (different from the endpoints of the arrows) on one of the circles. If we cut a based circle at the base point, we will get a Gauss diagram of a long link, see Figure \ref{fig:Hopf+long+Gauss}.

\begin{figure}[htb]
\centerline{\includegraphics[height=0.93in]{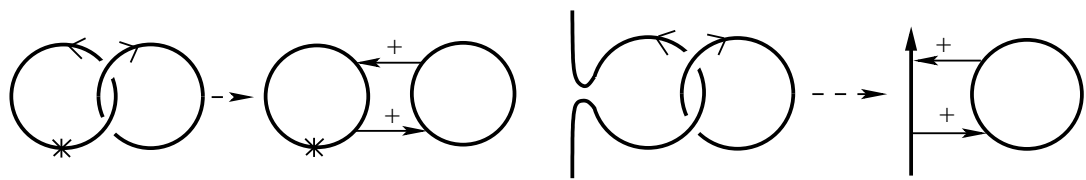}}
\caption{\label{fig:Hopf+long+Gauss} Diagrams of based and long classical Hopf links together with the associated  Gauss diagrams.}
\end{figure}

\begin{figure}[htb]
\centerline{\includegraphics[height=0.8in]{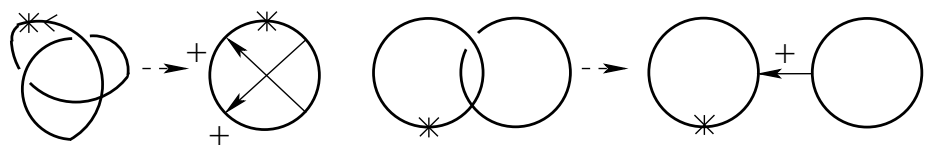}}
\caption{\label{fig:trefoil+hopf+virtual} Virtual trefoil and a virtual Hopf link with the corresponding Gauss diagrams.}
\end{figure}

Note that not every collection of circles with signed arrows is realizable as a Gauss diagram of a classical link, see Figure \ref{fig:trefoil+hopf+virtual}. The Gauss diagram of a virtual link diagram is constructed in
the same way as for a classical link diagram, but all virtual crossings are disregarded, see Figure \ref{fig:trefoil+hopf+virtual}. Similarly to the case of long classical links, each non-realizable Gauss diagram with a base point represents a long  virtual link.

Two Gauss diagrams represent isotopic classical/virtual links (long links) if and only if they are related by a finite number of Reidemeister moves for Gauss diagrams (applied away from the base point) shown in Figure \ref{fig:Reidem}, where $\eps=\pm1$, see e.g. \cite{CDBook, Oestlund, P}. Note that not all Reidemeister moves are shown in Figure \ref{fig:Reidem} (for example third Reidemeister moves with at least one negative crossing are not shown), but their generating set is, see \cite{P}.

\begin{figure}[htb]
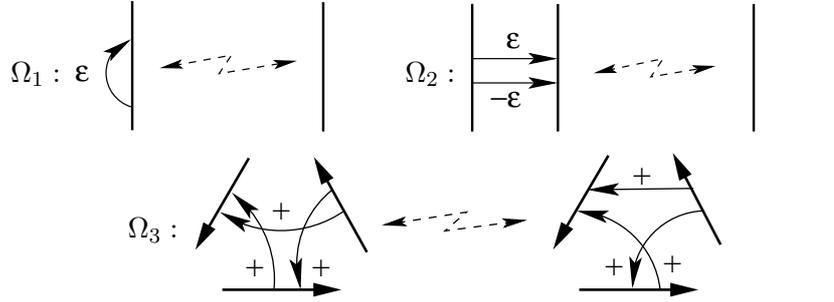

\begin{equation*}
\Omega_{1}:\quad\ris{-7}{-3}{33}{reidemeister1}{-1.1}\quad\quad\quad\Omega_{2}:\quad\ris{-7}{-3}{49}{reidemeister2}{-1.1}
\end{equation*}
\begin{equation*}
\Omega_{3}:\quad\ris{-8}{-3}{70}{reidemeister3}{-1.1}
\end{equation*}
\caption{\label{fig:Reidem} Reidemeister moves of Gauss diagrams.}
\end{figure}

Two Gauss diagrams represent isotopic classical framed links if and only if they are related by a finite number of Reidemeister moves for framed Gauss diagrams. It suffices to consider $\Omega_2$ and $\Omega_3$ of Figure \ref{fig:Reidem} and substitute the move $\Omega_1$ by the move

\begin{equation*}
\Omega_1^F:\quad\ris{-7}{-3}{60}{reidemeister1-framed}{-1.1}
\end{equation*}

Note that segments involved in $\Omega_2$ or $\Omega_3$ may lie on different components of the link and the order in which they are traced along the link may be arbitrary.

\subsection{Arrow diagrams and Gauss diagram formulas}

An \textit{arrow diagram} is a modification of a notion of a Gauss
diagram, in which we forget about realizability and
signs of arrows, see Figure \ref{fig:non-realizable}.

\begin{figure}[htb]
\centerline{\includegraphics[height=0.6in]{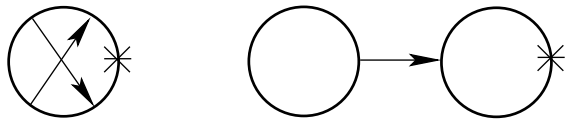}}
\caption{\label{fig:non-realizable}Connected arrow diagrams.}
\end{figure}

In other words, an arrow diagram consists of a number of oriented circles with several arrows connecting pairs of distinct points on them. We consider these diagrams up to
orientation-preserving diffeomorphisms of the circles.
An arrow diagram is \textit{based}, if a base point (different from the
end points of the arrows) is marked on one of the circles. An arrow
diagram is connected, if it is connected as a graph. Further we will
consider only based connected arrow diagrams, so we will omit mentioning
these requirements throughout this chapter, unless a misunderstanding is
likely to occur. In figures we will always draw the circles of an arrow
diagram with a counter-clockwise orientation.

M. Polyak and O. Viro suggested \cite{PV} the following approach to
compute link invariants using Gauss diagrams.

\begin{defn}\rm
Let $A$ be an arrow diagram with $m$ circles and let $G$ be a based
Gauss diagram of an $m$-component oriented (long, virtual) link. A \textit{homomorphism}
$\phi:A\rightarrow G$ is an orientation preserving homeomorphism
between each circle of $A$ and each circle of $G$, which maps a base
point of $A$ to the base point of $G$ and induces an injective map
of arrows of $A$ to the arrows of $G$. The set of arrows in
$\Im(\phi)$ is called a \textit{state} of $G$ induced by $\phi$ and is
denoted by $S(\phi)$. The \textit{sign} of $\phi$ is defined as
$\s(\phi)=\prod_{\a\in S(\phi)}sign(\a)$. A set of all homomorphisms
$\phi:A\to G$ is denoted by $\Hom(A,G)$.
\end{defn}

Note that since the circles of $A$ are mapped to circles of $G$,
a state $S$ of $G$ determines both the arrow diagram $A$ and the
map $\phi:A\to G$ with $S=S(\phi)$.

\begin{defn}
\rm A \textit{pairing} between an  arrow diagram $A$ and $G$ is defined by
$$\AG=\sum_{\phi\in\Hom(A,G)}\s(\phi).$$
\end{defn}

For an arbitrary arrow diagram $A$ the pairing $\AG$ does not represent a link invariant, i.e. it depends on the choice of a Gauss diagram of a link. However, for some special linear combinations of arrow diagrams the result is independent of the choice of $G$, i.e. does not change under the Reidemeister moves for Gauss diagrams. Using a slightly modified definition of arrow diagrams Goussarov, Polyak and Viro showed in \cite{GPV} that each real-valued Vassiliev invariant of long knots may be obtained this way. In particular, all coefficients of the Conway polynomial may be obtained using suitable combinations of arrow diagrams.

\subsection{Surfaces corresponding to arrow diagrams}

Given an arrow diagram $A$, we define an oriented surface $\S(A)$ as
follows. Firstly, replace each circle of $A$ with an oriented disk
bounding this circle. Secondly, glue $1$-handles to boundaries of
these disks using each arrow as a core of a ribbon. See Figure
\ref{fig:surface}.

\begin{figure}[htb]
\centerline{\includegraphics[height=0.7in]{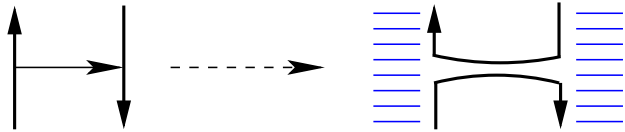}}
\caption{\label{fig:surface} Constructing a surface from an arrow diagram.}
\end{figure}

\begin{defn}\rm
By the \textit{genus} and the \textit{number of boundary components}
of an arrow diagram $A$ we mean the genus and the number of boundary
components of $\S(A)$.
\end{defn}

\begin{rem}\rm
Let $A$ be an arrow diagram with $n$ arrows and $m$ circles. Then the Euler characteristic $\chi$ of $\S(A)$ equals to $\chi(\S(A))=m-n$. If $A$ is connected,
$n\geq m-1$. If $A$ has one boundary component, $n\neq m (\rm{mod}2)$.
\end{rem}

\begin{ex}\rm
The arrow diagram with one circle in Figure \ref{fig:non-realizable} is of genus
one, while the other arrow diagram in the same figure is
of genus zero. Both of them have one boundary component.
\end{ex}

Further we will work only with based connected arrow diagrams with one or two
boundary components.

\subsection{Ascending and descending arrow diagrams}
In this subsection we define a special type of arrow diagrams with one and two boundary components.

\begin{defn}\rm
Let $A$ be a based arrow diagram with one boundary component. As we go
along the boundary of $\S(A)$ starting from the base point, we pass
on the boundary of each ribbon twice: once in the direction of its
core arrow, and once in the opposite direction. $A$ is \textit{ascending}
(respectively, \textit{descending}) if we pass each ribbon of $\S(A)$ first
time in the direction opposite to its core arrow (respectively, in the
direction of its core arrow).
\end{defn}

\begin{rem} \rm
In order to define the notion of ascending and descending arrow diagrams we used the fact that all arrow diagrams are based and connected. The position of the base point in a connected arrow diagram is essential to define an order of passage.
\end{rem}

\begin{ex}\rm
Arrow diagrams presented below are
ascending (a), descending (b) and neither ascending nor descending (c).
\end{ex}

\begin{figure}[htb]
\centerline{\includegraphics[height=0.9in]{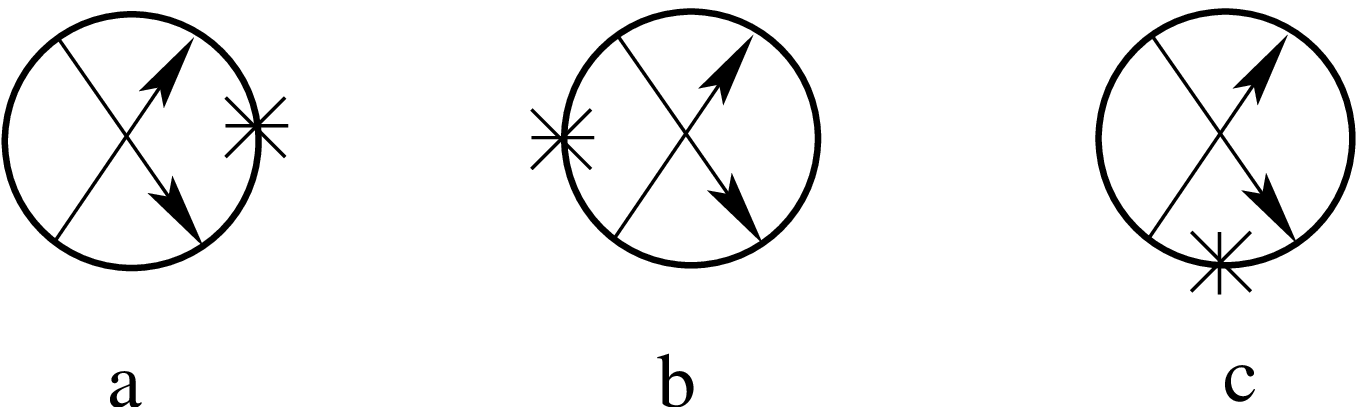}}
\label{fig:arrow_diagrams}
\end{figure}

Denote by $\mathcal{A}_{n,m}$ (respectively, $\mathcal{D}_{n,m}$) the set of all
ascending (respectively, descending) arrow diagrams with $n$ arrows, $m$ circles and one boundary component.
\begin{ex}\rm
The sets $\A_{2,1}$ and $\D_{2,1}$  are presented below.
$$\A_{2,1}:=\quad\ris{-4}{-3}{9}{A2}{-1.1}\quad\textrm{and}\quad \D_{2,1}:=\quad\ris{-4}{-3}{9}{D2}{-1.1}$$
\end{ex}

\begin{defn} \rm
Let $G$ be any Gauss diagram with $m$ circles. We set
$$A_{n,m}(G):=\sum_{A\in \mathcal{A}_{n,m}}\AG\qquad D_{n,m}(G):=\sum_{A\in \mathcal{D}_{n,m}}\AG$$
and define the following polynomials:
$$\nA(G):=\sum_{n=0}^\infty A_{n,m}(G) z^n \qquad \nD(G):=\sum_{n=0}^\infty D_{n,m}(G) z^n$$
\end{defn}

These polynomials will play an important role in the next section. Now we generalize a notion of ascending (descending) arrow diagram to arrow diagrams with two boundary components. We would like to point out that in \cite{B} this notion is generalized for arrow diagrams with arbitrary number of boundary components.

\begin{defn}\rm
Let $A$ be an arrow diagram with two boundary components. As we go
along the component of $\partial\S(A)$ starting from the base point, we pass
on the boundary of each ribbon once or twice (since $A$ is connected we must pass all ribbons at least once). We call core arrows, which we pass only in one direction, the \textit{separating arrows}. Now we place another starting point \textrm{$\bullet$} on the second component of $\partial\S(A)$ near the first separating arrow which we encounter in the passage, and start going along this component of $\partial\S(A)$. $A$ is \textit{ascending} (respectively, \textit{descending}) if we pass each ribbon of $\S(A)$ first time in the direction opposite to its core arrow (respectively, in the direction).
\end{defn}

\begin{ex} \rm
Arrow diagrams below have two boundary components. Diagram (a) is ascending and diagram (b) is descending. Separating arrows are shown in bold.
\begin{equation*}
\ris{-7}{-1}{38}{2_comp_asc_diagram}{-1.1}
\hspace{1.5cm}
\ris{-7}{-1}{61}{2_comp_des_diagram}{-1.1}
\end{equation*}
\end{ex}

Denote by $\A_{n,m}^2$ (respectively, $\D_{n,m}^2$) the set of all
ascending (respectively, descending) arrow diagrams with $n$ arrows,
$m$ circles and two boundary components.

\begin{ex}\rm
All diagrams in the set $\A^2_{2,2}$ are presented below.
$$
\ris{-3}{-3}{24}{D-22f}{-1.1}
\hspace{0.9cm}\ris{-4}{-3}{24}{D-22a}{-1.1}
\hspace{0.9cm}\ris{-4}{-3}{24}{D-22b}{-1.1}
\hspace{0.9cm}\ris{-4}{-3}{24}{D-22d}{-1.1}
\hspace{0.9cm}\ris{-3}{-3}{24}{D-22e}{-1.1}
$$
\end{ex}

Let $G$ be any Gauss diagram with $m$ circles. We set
$$\AA_{n,m}(G):=\sum_{A\in \mathcal{A}^2_{n,m}}\AG\qquad \DD_{n,m}(G):=\sum_{A\in \mathcal{D}^2_{n,m}}\AG.$$

A state $S(\phi)$ corresponding to $\phi:A\to G$ for an ascending
(respectively descending) diagram $A$ with one or two boundary
components will be also called \textit{ascending} (respectively
\textit{descending}). It is useful to reformulate this notion in terms
of a tracing of a diagram $G$.

\begin{defn}\rm
Given $\phi:A\to G$, a passage along the boundary of the surface
$\S(A)$ induces a \textit{tracing} of $G$: we follow an arc of a circle
of $G$ starting from the base point until we hit an arrow in $S(\phi)$,
turn to this arrow, then continue on another arc of $G$ following
the orientation and so on, until we return to the base point. In case of
two boundary components we repeat the same procedure starting near
the image of the first separating arrow. Then a state $S(\phi)$ is
\textit{ascending} (respectively \textit{descending}), if we approach
every arrow in the tracing first time at its head (respectively at its tail).
\end{defn}

\subsection{Separating states}
\label{subsec-Enhanced-states}
In this subsection we define a notion of a separating state. This notion will be extensively used in the following sections.
\begin{figure}[htb]
\centerline{\includegraphics[height=1in]{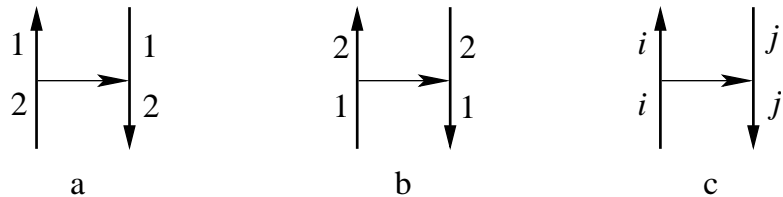}}
\caption{\label{fig:asc-des-arcs} Ascending and descending labeling. Here $i,j\in\{1,2\}$.}
\end{figure}

\begin{defn}\rm
Let $G$ be a based Gauss diagram.
An \textit{ascending (respectively descending) separating state $S$ of $G$} is a state $S$ of $G$, together with a labeling of arcs of $G$ (i.e., intervals of circles of $G$ between endpoints of arrows) by $1$ and $2$ such that:
\begin{enumerate}
\item
Each arc near $\a\in S$ is labeled as in Figure \ref{fig:asc-des-arcs}a (respectively in Figure \ref{fig:asc-des-arcs}b).
\item
Each arc near $\a\notin S$ is labeled as in Figure \ref{fig:asc-des-arcs}c.
\item
An arc with a base point is labeled by $1$.
\end{enumerate}
\end{defn}

Every separating (ascending or descending) state  $S$ in $G$ defines a new Gauss diagram $G_S$ with labeled circles as follows:
We smooth each arrow in $G$ which belongs to $S$, see Figure \ref{fig:smoothing1}, and denote the resulting smoothed Gauss diagram by $G_S$. Each circle in $G_S$ is labeled by $i$, if it contains an arc labeled by $i$.

\begin{figure}[htb]
\centerline{\includegraphics[height=0.75in]{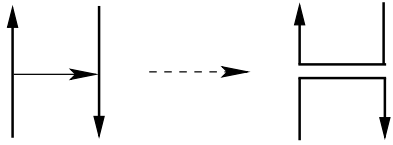}}
\caption{\label{fig:smoothing1} Smoothing of an arrow.}
\end{figure}

Now we return to arrow diagrams with two boundary components. Let $A\in~\mathcal{A}_{n,m}^2$ or $A\in\mathcal{D}_{n,m}^2$. We denote by $\sigma(A)$ the set of separating arrows in $A$ and label the arcs of circles in $A$ by $1$ if the corresponding arc belongs to the first boundary component of $\S(A)$ and by $2$ otherwise.

Note that each homomorphism $\phi:A\rightarrow G$ induces an ascending or descending separating state $S$ of $G$, by taking $S=\phi(\sigma(A))$ and labeling each arc of $G$ by the same label as the corresponding arc of $A$.

\begin{defn} \rm
Let $G$ be a based Gauss diagram with $m$ circles. Let $S$ be an ascending (respectively descending) separating state of $G$, $A\in\mathcal{A}_{n,m}^2$ (respectively $A\in~\mathcal{D}_{n,m}^2$), and $\phi:A\rightarrow G$. We say that $\phi$ is \textit{$S$-admissible}, if an ascending (respectively descending) separating state induced by $\phi$ coincides with $S$.
\end{defn}

\begin{defn} \label{D:G_S}\rm
Let $S$ be an ascending (respectively descending) separating state of $G$, and $A\in\mathcal{A}_{n,m}^2$ (respectively $A\in\mathcal{D}_{n,m}^2$). In each case we define an \textit{$S$-pairing} $\AG_S$ by:
$$\AG_S:=\sum\limits_{\phi:A\rightarrow G}\s(\phi),$$
where the summation is over all $S$-admissible $\phi:A\rightarrow G$. We set
$$A^2_{n,m}(G)_S:=\sum_{A\in\A_{n,m}^2}\AG_S\quad
\textrm{and}\quad  D^2_{n,m}(G)_S:=\sum_{A\in\D_{n,m}^2}\AG_S.$$
\end{defn}

\section{Counting surfaces with one boundary component}
\label{sec-1-comp}

In this section we review Gauss diagram formulas for coefficients of the Conway polynomial $\nabla$ obtained in \cite[Theorem 3.5]{CKR} for classical knots and $2$-component classical links.

\begin{thm}[\cite{CKR}]
Let $G$ be a Gauss diagram of a classical knot or $2$-component classical link $L$.
\begin{equation}
\nA(G)=\nD(G)=\nabla(L),
\end{equation}
where $\nabla(L)$ is the Conway polynomial.
\end{thm}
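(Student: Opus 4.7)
The plan is to show that $\nA(G)$ (and, by an analogous argument, $\nD(G)$) satisfies the three defining properties of the Conway polynomial $\nabla(L)$ -- invariance under Reidemeister moves of Gauss diagrams, the Conway skein relation, and the correct values on trivial diagrams -- and then conclude by uniqueness of $\nabla$.

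First, I would check invariance under the moves $\Omega_1$, $\Omega_2$, $\Omega_3$ of Figure \ref{fig:Reidem}. Under $\Omega_1$ the new isolated arrow forces any ascending arrow diagram using it to violate either connectivity or the one-boundary-component condition, so it does not contribute; arrow diagrams not using it see the move as invisible. Under $\Omega_2$, the two added arrows carry opposite signs, and a case analysis pairs ascending diagrams using exactly one of them against those using the other so that signed contributions cancel, while contributions using both or neither also match in sign across the move. The move $\Omega_3$ is the most intricate and calls for an explicit sign-preserving bijection between the ascending states involving the three permuted arrows on the two sides of the move, using the combinatorics of the boundary traversal of $\Sigma(A)$.

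Next I would establish the skein relation. Fix an arrow $\alpha$ of $G$ and let $G_+, G_-, G_0$ denote the three standard Gauss diagrams in the skein triple at $\alpha$. Partition $\Hom(A,G)$ according to whether $\alpha\in\Im(\phi)$: homomorphisms avoiding $\alpha$ contribute equally to $\nA(G_+)$ and $\nA(G_-)$ and so cancel in the difference. For those hitting $\alpha$, the sign of $\alpha$ changes by $-1$, leaving a residual sum that I would match with $z\,\nA(G_0)$ via a correspondence that deletes the arrow over $\alpha$ from $A$ and re-identifies the circles of $A$ with those of $G_0$ (which has $m\pm 1$ circles, depending on the orientation of the smoothing). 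The decisive technical step, and the main obstacle, is to check that this correspondence respects all the structure: the resulting $A'$ must be connected, have one boundary component, and inherit the ascending property; the weight shift is accounted for by $|A'|=|A|-1$; and the apparent factor of $2$ from $\s(\alpha)-(-\s(\alpha))$ must be absorbed by a parity argument showing that precisely one of the two a priori possible ascending lifts of any $(A',\phi')$ in $G_0$ actually arises as an ascending diagram in $G$. The parity constraint $n\not\equiv m\pmod 2$ for one-boundary-component diagrams (noted in the Remark after the definition of $\Sigma(A)$) is expected to play a key role in coupling the change in arrow count to the change in circle count.

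Finally, I would verify normalization. On a crossingless Gauss diagram of the unknot only the empty arrow diagram contributes, giving $\nA(G)=1=\nabla(O_1)$; on a crossingless Gauss diagram of the two-component unlink no connected based arrow diagram maps into it, so $\nA(G)=0=\nabla(O_2)$. Combined with the previous two steps, this determines $\nA(G)$ uniquely on Gauss diagrams of classical knots and two-component classical links (any such diagram can be reduced to an unlink by successive crossing changes, each accounted for by the skein relation), proving $\nA(G)=\nabla(L)$; the argument for $\nD(G)$ is entirely parallel, with the roles of heads and tails of arrows exchanged throughout.
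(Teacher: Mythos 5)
Your overall route coincides with the paper's: invariance of $A_{n,m}$ under the Gauss diagram Reidemeister moves applied away from the base point (Theorem \ref{thm:Reidemeister-basepoint}), the skein relation $A_{n,m}(G_+)-A_{n,m}(G_-)=A_{n-1,m_0}(G_0)$ (Theorem \ref{thm:skein-relation}), and then normalization on unlinks plus uniqueness of the Conway polynomial (Corollary \ref{cor:Chmutov-Conway}). Your shortcut of running the uniqueness induction directly on based diagrams, instead of first proving base point independence as the paper does in Theorem \ref{thm:basepoint-independence}, is legitimate for classical knots and two-component links, since a based classical unlink diagram can be trivialized by moves away from the base point.

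Two details in your plan need repair, and the second is the one you yourself single out as decisive. In the $\Omega_2$ step, states containing \emph{both} new arrows do not ``match in sign across the move'': they have no counterpart in $G$ at all. The correct statement, and the one the paper uses, is that such states cannot occur, because the two adjacent parallel ribbons bound a bigon and hence force a second boundary component of $\Sigma(A)$. More importantly, in the skein step the parity constraint $n\not\equiv m\ (\mathrm{mod}\ 2)$ cannot select which of the two lifts $S\cup\alpha_+$, $S\cup\alpha_-$ is ascending: $G_+$ and $G_-$ have the same numbers of circles and arrows, so parity treats the two lifts symmetrically and would never kill the factor of $2$. The actual mechanism is that the tracing determined by a state is independent of the directions of its arrows (turning at an arrow just means jumping to the other endpoint and continuing along the orientation), so the tracings of $G_+$ with $S\cup\alpha_+$ and of $G_-$ with $S\cup\alpha_-$ coincide; since the crossing change reverses the direction of the arrow, this common tracing meets the head first in exactly one of the two diagrams. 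Hence exactly one lift is ascending, each ascending state $S$ of $G_0$ contributes $\s(S)$ (not $2\s(S)$) to the difference, and the skein relation follows as in the paper.
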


Let $G$ be a Gauss diagram with $m$ circles. We give a direct proof
of the invariance of both $\nA(G)$ and $\nD(G)$ under the Reidemeister moves which do not involve a base point. This allows us to extend this result to $m$-component classical links and also to define two different generalizations of the Conway polynomial to long virtual links. At the end of this section we present some properties of these polynomials.

\subsection{Invariants of long links}
In this subsection we generalize the result of \cite{CKR} to $m$-component (classical or virtual) long links.

\begin{thm}\label{thm:Reidemeister-basepoint}
Let $G$ be a Gauss diagram of an $m$-component (classical or virtual) long link $L$.
Then $\nA(G)$ and $\nD(G)$ define polynomial invariants of long links, i.e. do not depend on the choice of $G$.
\end{thm}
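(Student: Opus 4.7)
The plan is to verify invariance of $A_{n,m}(G)$ and $D_{n,m}(G)$ under the Reidemeister moves $\Omega_1, \Omega_2, \Omega_3$ applied away from the base point; this will imply the desired invariance of $\nA(G)$ and $\nD(G)$. I sketch the argument for $\nA$; the case of $\nD$ is symmetric, obtained by reversing the role of arrow head and tail throughout. The general strategy is, for each move taking $G$ to $G'$, to compare $\Hom(A, G)$ and $\Hom(A, G')$ as $A$ ranges over $\A_{n,m}$, partitioning homomorphisms by which subset of the newly affected arrows appears in the state $S(\phi)$.

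For $\Omega_1$, the new arrow $\beta$ has its endpoints adjacent on a single arc with nothing between them. If $\beta \in S(\phi)$ for some $\phi \colon A \to G'$ with $A \in \A_{n,m}$, then the corresponding $\alpha \in A$ also has endpoints adjacent on a circle with nothing between, and in $\S(A)$ one edge of the ribbon over $\alpha$ together with the short arc between its endpoints forms a small closed loop, which is an additional boundary component of $\S(A)$. This contradicts $A \in \A_{n,m}$, so $\beta$ is never used, and $\Hom(A, G)$ is in sign-preserving bijection with $\Hom(A, G')$. For $\Omega_2$, two new arrows $\beta_1, \beta_2$ of opposite signs are added between two adjacent arcs. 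States using neither correspond directly to $\Hom(A, G)$, while states using exactly one of $\beta_1, \beta_2$ pair up via swapping which new arrow is the image of the same $\alpha \in A$, and the opposite signs make the contributions cancel. States using both are excluded by a combined topological and ascending argument: in one arrangement of the pair of preimage arrows in $A$ the ascending property fails, and in the other arrangement $\S(A)$ acquires an extra boundary component.

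For $\Omega_3$, the move replaces one local configuration of three arrows by another. I proceed by analyzing, subset by subset, the possible intersections of $S(\phi)$ with the three participating arrows, and setting up an explicit bijection between states on the two sides of the move. Most subsets give a direct identification preserving signs and the ascending structure; for the few that require swapping the roles of arrows, the sign analysis cancels or matches. The main obstacle is this $\Omega_3$ verification: the ascending condition interacts intricately with the local reconfiguration, and one must check carefully, case by case, that the bijection and the sign computation agree for every possible subset of the three arrows appearing in the image. The saving grace is that the topological framework --- counting boundary components of $\S(A)$ --- substantially restricts which preimage configurations are permissible, which is what makes the enumeration tractable.
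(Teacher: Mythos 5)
Your overall strategy coincides with the paper's: reduce to invariance of $A_{n,m}$ under $\Omega_1$, $\Omega_2$, $\Omega_3$ applied away from the base point, and argue by comparing states on the two sides of each move. Your $\Omega_1$ argument is correct and is the paper's argument (an isolated arrow in the image would create a separate boundary component of $\S(A)$). Your $\Omega_2$ treatment is also correct, and in the both-arrows case it is even a little more careful than the paper's one-line justification: depending on the orientation variant, the pair of new arrows is excluded either because $\S(A)$ would acquire an extra boundary component or because the ascending condition fails, exactly as you say.

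The genuine gap is the $\Omega_3$ case, which is the substance of the proof and which you only announce rather than carry out. There is no cancellation there; what is needed is a sign-preserving bijection between ascending states of $G$ and $\tG$, and this bijection is not ``the same subset of the three arrows.'' For example, an ascending state of $G$ of the form $S\cup\a_l\cup\a_r$ corresponds in $\tG$ to either $S\cup\a_t\cup\a_r$ or $S\cup\a_t\cup\a_l$, and which of the two occurs depends on the order in which the tracing of the boundary of $\S(A)$ enters and leaves the fragment --- global data, since the ascending condition constrains the whole tracing, not the local picture. One must therefore specify the correspondence case by case and verify that it preserves both the sign and the pattern of entries and exits of the tracing into the fragment; the paper does precisely this with a table of fourteen local correspondences and a worked example recording the sequence 1in, 1out, 2in, 2out, 3in, 3out. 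Your statement that ``most subsets give a direct identification'' and that for the remaining ones ``the sign analysis cancels or matches'' is exactly the assertion that requires proof, so as written the $\Omega_3$ step is missing. A further caveat: for $\Omega_3$ the boundary-component count is of little help in restricting configurations --- all three arrows can perfectly well occur in a one-boundary-component state --- so the tractability you attribute to the surface framework really has to come from the ascending/tracing combinatorics.
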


\begin{proof}
We prove that $\nA(G)$ is an invariant of an underlying link. The proof for $\nD(G)$ is the same.
It suffices to show that $ A_{n,m}(G)$ is invariant under the Reidemeister moves $\Omega_1$, $\Omega_2$ and $\Omega_3$ of Figure \ref{fig:Reidem} applied away from the base point.
Let $G$ and $\tG$ be two Gauss diagrams that differ by an application of $\Omega_{1}$, so that $\tG$ has one
additional isolated arrow $\a$ on one of the circles.
Ascending states of $G$ are in bijective correspondence with
ascending states of $\tG$ which do not contain $\a$. But $\a$
cannot not be in the image of $\phi:A\to \tG$ with $A\in \mathcal{A}_{n,m}$, because $A$ should have one boundary component. Thus $A_{n,m}(G)=A_{n,m}(\tG)$.

Let $G$ and $\tG$ be two Gauss diagrams that differ by an application
of $\Omega_2$, so that $\tG$ has two
additional arrows $\a_{\eps}$ and $\a_{-\eps}$, see Figure \ref{fig:Reidem}.
Ascending states of $G$ are in bijective correspondence with
ascending states of $\tG$ which do not contain $\a_{\pm\eps}$. Note that both $\a_{\eps}$ and $\a_{-\eps}$ can not be in the image of $\phi:A\to \tG$ with $A\in \mathcal{A}_{n,m}$ because $A$ has one boundary
component. Ascending states of $\tG$ which contain one of $\a_{\pm\eps}$ come in pairs $S\cup\a_{\eps}$ and $S\cup\a_{-\eps}$ with opposite signs, thus cancel out in $A_{n,m}(\tG)$. Hence 
$$A_{n,m}(G)=A_{n,m}(\tG).$$

Let $G$ and $\tG$ be  two Gauss diagrams that differ by an
application of $\Omega_3$, as shown in Figure \ref{fig:Reidem} ($G$ is on the left and $\tG$ is on the right).

Then there is a bijective correspondence between ascending states of
$G$ and $\tG$. This correspondence preserves the signs and the
combinatorics of the order in which the tracing enters and leaves
the neighborhood of these arrows. The table below summarizes this
correspondence.

$$\begin{array}{ccccccc}  \hline \\[-12pt] \hspace{-5pt}
  \kar{1t}{1tp}{1}{2}{3}{} & \hspace{-8pt}
  \kar{1t}{1tp}{1}{3}{2}{} & \hspace{-8pt}
  \kar{1t}{1tp}{2}{1}{3}{} & \hspace{-8pt}
  \kar{1l}{1lp}{1}{2}{3}{} & \hspace{-8pt}
  \kar{1l}{1lp}{1}{3}{2}{} & \hspace{-8pt}
  \kar{1l}{1lp}{2}{3}{1}{} & \hspace{-8pt}
  \kar{1r}{1rp}{3}{1}{2}{}
       \hspace{-5pt}\vspace{2pt} \\ \hline
\end{array}
$$

$$\begin{array}{ccccccc}  \hline \\[-12pt] \hspace{-5pt}
  \kar{1r}{1rp}{2}{1}{3}{} & \hspace{-8pt}
  \kar{1r}{1rp}{1}{2}{3}{} & \hspace{-8pt}
  \kar{2tl}{2lrp}{1}{2}{3}{}& \hspace{-8pt}
  \kar{2tr}{2lrp}{2}{1}{3}{}& \hspace{-8pt}
  \kar{2lr}{2trp}{1}{2}{3}{}& \hspace{-8pt}
  \kar{2lr}{2tlp}{1}{3}{2}{} & \hspace{-8pt}
  \kar{3}{3p}{1}{2}{3}{}
       \hspace{-5pt}\vspace{2pt} \\ \hline
\end{array}
$$

For a better understanding of this table, let us explain one of the
cases in details. Denote the top, left, and right arrows in the
fragment by $\a_t$, $\a_l$, and $\a_r$ respectively.

Consider a state $S\cup\a_l\cup\a_r$ of $G$ which contains two arrows of the fragment. The order of tracing the fragment depends on $S$. Only two orders of tracing may give an ascending state:

$$\textrm{States of}\quad G:\qquad\qquad\ris{-8}{-3}{30}{o3-2lr1}{-1.1}\quad\quad \textrm{or}\quad\quad\ris{-8}{-3}{30}{o3-2lr2}{-1.1}$$
$$\textrm{States of}\quad\tG:\qquad\qquad\ris{-8}{-3}{30}{o3-2trp1}{-1.1}\quad\quad \textrm{or}\quad\quad\ris{-8}{-3}{30}{o32-tlp1}{-1.1}$$

Here the three consecutive entries and exits from the fragment are indicated by 1in, 1out, 2in, 2out, 3in, 3out.
In the first case, the corresponding state of $\tG$ is $S\cup\a_t\cup\a_r$. Note that the pattern of entries and exits from the fragment is indeed the same as in $G$.
In the second case, the corresponding state of $\tG$ is $S\cup\a_t\cup\a_l$. The pattern of entries and exits is again the same as in $G$.
\end{proof}

\subsection{Properties of $A_{n,m}(G)$ and $D_{n,m}(G)$}

\begin{thm}\label{thm:skein-relation}
Let $G_+$, $G_-$ and $G_0$ be Gauss diagrams which differ
only in the fragment shown in Figure \ref{fig:Conway}. Then
\begin{equation}
\nA(G_+)-\nA(G_-)=z\nA(G_0)\quad\quad \nD(G_+)-\nD(G_-)=z\nD(G_0).
\end{equation}
\end{thm}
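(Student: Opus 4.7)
The plan is to prove the identity for $\nA$, with the $\nD$ identity following by a symmetric argument. Denote by $\alpha$ the arrow distinguishing the triple: in $G_+$ it has one direction and sign $+1$; in $G_-$ the opposite direction and sign $-1$; and in $G_0$ it has been replaced by the oriented smoothing. The proof splits each of $\nA(G_\pm)$ according to whether the homomorphism $\phi$ has the distinguished arrow in its state $S(\phi)$. For homomorphisms that do not use the distinguished arrow, $\phi$ factors through the common sub-Gauss-diagram $G_+\setminus\{\alpha\}=G_-\setminus\{\alpha\}$ and $\s(\phi)$ is independent of $\s(\alpha)$, so these contributions to $\nA(G_+)$ and $\nA(G_-)$ coincide and cancel pairwise in the difference.

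For the remaining homomorphisms, the plan is to build a bijection with the data defining $z\nA(G_0)$. Given $A\in\A_{n,m}$ and $\phi:A\to G_\pm$ with the distinguished arrow in $S(\phi)$, let $\tilde\alpha=\phi^{-1}(\text{distinguished arrow})$ and define $A'$ to be the arrow diagram obtained from $A$ by smoothing $\tilde\alpha$ via the rule of Figure~\ref{fig:smoothing1}; this induces $\phi':A'\to G_0$ whose state is $S(\phi)\setminus\{\text{distinguished arrow}\}$. Conversely, from $\phi':A'\to G_0$ with $A'\in\A_{n-1,m_0}$, one re-inserts an arrow at the position where $\alpha$ was smoothed in $G_0$; its orientation is uniquely determined by the requirement that the resulting arrow diagram be ascending, and this orientation picks out a unique pre-image in either the $G_+$ or the $G_-$ data. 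On signs: a $\phi$ into $G_+$ gives $\s(\phi)=+\s(\phi')$, while a $\phi$ into $G_-$ gives $\s(\phi)=-\s(\phi')$, and in each case the contribution to $\nA(G_+)-\nA(G_-)$ equals $+\s(\phi')$. Summing over $(A',\phi')$ and re-indexing the arrow count $n\mapsto n-1$ produces exactly $z\,\nA(G_0)$.

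The main obstacle is the topological verification that the smoothing construction, together with its inverse, is a well-defined bijection between the $\A_{n,m}$-data for $G_\pm$ and the $\A_{n-1,m_0}$-data for $G_0$. A priori, smoothing a ribbon in a connected oriented surface with one boundary component could split the surface, create an extra boundary component, or decrease the genus; the ascending hypothesis is precisely what rules out the pathological cases. The crucial fact is that, since $\tilde\alpha$ is approached first at its head in the tracing of $\Sigma(A)$, smoothing $\tilde\alpha$ splices two consecutive segments of the boundary tracing into a single one, so $\Sigma(A')$ remains connected with one boundary component and the induced tracing continues to be ascending on the remaining ribbons. Conversely, analyzing the tracing of $\Sigma(A')$ near the re-insertion site shows that exactly one orientation of the inserted arrow yields an ascending $A$, which is what makes the inverse map well defined and unambiguous. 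Once this topological claim is in place, the counting and the sign bookkeeping above complete the proof.
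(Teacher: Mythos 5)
Your proof is correct and takes essentially the same route as the paper's (which in turn follows Chmutov--Khoury--Rossi): contributions of states not containing the distinguished arrow cancel in the difference, while ascending states containing it correspond bijectively, via the oriented smoothing, to ascending states of $G_0$, with exactly one of $G_+$, $G_-$ receiving each such state and the contribution equal to $+\s(\phi')$ in both cases, which after re-indexing gives $z\nA(G_0)$. One cosmetic remark: preservation of connectivity and of a single boundary component under smoothing an arrow of the state holds regardless of the ascending condition (the boundary tracing simply skips the two ribbon sides), and the ascending hypothesis is really what forces the unique orientation of the re-inserted arrow and the ascent of the remaining arrows, so your argument stands.
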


\begin{figure}[htb]
\centerline{\includegraphics[height=0.7in]{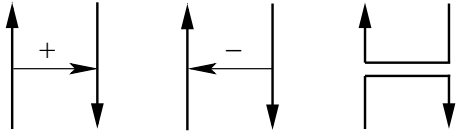}}
\caption{\label{fig:Conway} A Conway triple of Gauss diagrams.}
\end{figure}

\begin{proof}
Again we will prove this theorem for $\nA$. Denote by $m$ and
$m_0$ the number of circles in $G_\pm$ and $G_0$, respectively.
It is enough to prove that for each $n$ we have

\begin{equation}\label{eq:Conway}
A_{n,m}(G_+)-A_{n,m}(G_-)=A_{n-1,m_0}(G_0).
\end{equation}

Denote the arrows of $G_+$ and $G_-$ appearing in the fragment in Figure
\ref{fig:Conway} by $\a_+$ and $\a_-$, respectively. The proof of the skein
relation is the same as in \cite{CKR}. All ascending states of $G_\pm$ which
do not contain $\a_\pm$ cancel out in \eqref{eq:Conway} in pairs. Each
ascending state $S\cup\a_\pm$ of $G_\pm$ corresponds to a unique ascending
state $S$ of $G_0$, and vice versa: if $S$ is an ascending state of $G_0$, then
(depending on the order of the fragments in the tracing) exactly one of
$S\cup\a_+$ and $S\cup\a_-$ will give an ascending state on either $G_+$ or $G_-$.
\end{proof}

It is easy to see that both $A_{n,m}(G)$ and $D_{n,m}(G)$ depend on the
position of the base point when $G$ is a Gauss diagram associated with a
virtual link diagram. Let $G$ and $\widehat{G}$ be two Gauss diagrams shown 
in Figure \ref{fig:example-basepoint-virtual}. 
Then $A_{2,1}(G)=1$, $A_{2,1}(\widehat{G})=0$, $D_{2,1}(G)=0$, $D_{2,1}(\widehat{G})=1$.

However, for classical links this is not the case. Our next theorem states that in the case
of classical links, both $\nA(G)$ and $\nD(G)$ are independent of the position of the base point.

\begin{figure}[htb]
\centerline{\includegraphics[height=0.7in]{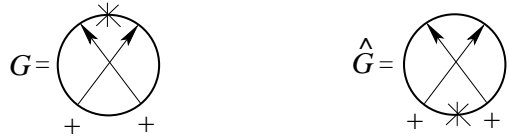}}
\caption{\label{fig:example-basepoint-virtual}Dependence on a basepoint.}
\end{figure}

\begin{thm}\label{thm:basepoint-independence}
Let $G$ be a based Gauss diagram of an $m$-component classical
link $L$. Both $\nA(G)$ and $\nD(G)$ are independent of the position of
the base point.
\end{thm}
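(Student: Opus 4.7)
I would prove Theorem \ref{thm:basepoint-independence} by establishing the stronger assertion that $\nA(G)$ (and $\nD(G)$) equals the Conway polynomial $\nabla(L)$ whenever $G$ is a based Gauss diagram of a classical link $L$. Since $\nabla(L)$ makes no reference to a base point, base-point independence follows immediately as a corollary. I focus on $\nA$; the argument for $\nD$ is identical.

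The plan is a double induction whose outer parameter is the crossing number $c$ of $G$ and whose inner parameter is the number $N$ of ``descending defects'' with respect to a fixed ordering of the components of $L$ and fixed tracing start points. For the base case $c = 0$, the Gauss diagram consists of $m$ disjoint circles with no arrows. When $m = 1$ the only homomorphism comes from the empty arrow diagram on one circle, which lies in $\A_{0,1}$ and contributes $1$, so $\nA(G) = 1 = \nabla(O_1)$. When $m \geq 2$ every connected based arrow diagram on $m$ circles must carry at least one arrow, so no homomorphism into an arrow-free $G$ exists and $\nA(G) = 0 = \nabla(O_m)$.

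For the inductive step I would split into two subcases. If $N = 0$, then $G$ is descending and hence represents the unlink $O_m$. Theorem \ref{thm:Reidemeister-basepoint} ensures that $\nA(G)$ is unchanged under Reidemeister moves applied away from the base point, and for a \emph{classical} descending diagram of $O_m$ such moves suffice to bring $G$ to the crossing-free diagram of $O_m$ without disturbing the base point; the base case then yields $\nA(G) = \nabla(O_m) = \nabla(L)$. If $N \geq 1$, pick a defect crossing $\a$, which is automatically distinct from the base point, and form the associated skein triple $G_+, G_-, G_0$. Setting $f(G) := \nA(G) - \nabla(L)$ and subtracting the Conway skein relation from Theorem \ref{thm:skein-relation} gives
$$f(G_+) - f(G_-) = z\bigl(\nA(G_0) - \nabla(L_0)\bigr).$$
The outer inductive hypothesis applied to $G_0$ (which has $c - 1$ crossings) kills the right-hand side. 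Flipping $\a$ strictly decreases $N$ while preserving $c$, so the inner inductive hypothesis applied to the flipped partner of $G$ gives $f$ of that partner equal to $0$. Combining these yields $f(G) = 0$, completing the induction.

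The main obstacle is the $N = 0$ subcase: one must justify that any classical descending Gauss diagram of the $m$-component unlink admits a Reidemeister simplification to the arrow-free diagram that avoids the base point. This is precisely where classicality is used essentially, since in the classical setting cutting the unlink at any point on any component yields the trivial long link, which can be straightened away from its endpoints. This is also exactly why the analogous statement fails in the virtual setting, in agreement with the example in Figure \ref{fig:example-basepoint-virtual} preceding the theorem.
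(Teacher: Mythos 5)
Your proposal is correct in outline but follows a genuinely different route from the paper. You prove the stronger identification $\nA(G)=\nabla(L)$ directly, by the classical descending-diagram scheme: outer induction on the crossing number, inner induction on the number of defects, using Theorem \ref{thm:skein-relation} to reduce to the smoothed diagram and to the flipped partner, and Theorem \ref{thm:Reidemeister-basepoint} to handle the descending base case. The paper instead proves base-point independence first, by an induction on the number of arrows with an inner induction on the number $r$ of arrows having only their arrowtail on the based component, and only afterwards deduces $\nA(G)=\nD(G)=\nabla(L)$ (Corollary \ref{cor:Chmutov-Conway}). The trade-off: your argument is shorter and conceptually transparent, but it presupposes the existence and skein characterization of $\nabla$, whereas the paper's scheme is designed to avoid exactly that assumption (this is one of its stated aims, since the same machinery is then run for long virtual links, where no such polynomial is available in advance). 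Moreover, the geometric input the two arguments need from classicality is different: the paper only uses that a component lying under all others can be split off by $\Omega_2$, $\Omega_3$ moves performed away from the base point, while you need that a descending based diagram of $O_m$ can be trivialized by moves avoiding the base point, i.e.\ that cutting the unlink at a point of one component yields the trivial long link.

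That last claim is where all the content of your proof sits, and you assert it rather than prove it; as stated, ``which can be straightened away from its endpoints'' is a restatement of what has to be shown. The fact is true, but it is essentially the statement that classical link theory with one marked point coincides with unbased link theory (the analogue of ``long knots $=$ knots'', which the paper invokes only for knots, in Section 5), and it requires a genuine three-dimensional argument: either the standard ball-fixing/isotopy-extension argument showing the closure map from long links with one open component is injective, or a direct combing of a descending based diagram keeping the base point fixed. To make your proof complete you should either cite this equivalence explicitly or prove the combing lemma for descending based diagrams; without one of these, the $N=0$ step is a gap, albeit one you correctly identified as the place where classicality (and the failure in the virtual case, cf.\ Figure \ref{fig:example-basepoint-virtual}) enters.
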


\begin{proof}
We will prove the independence of $A_{n,m}(G)$ of the position of the base
point, the proof for $D_{n,m}(G)$ is the same. We prove this statement by
induction on the number of arrows in $G$.

If $G$ has no arrows, there is nothing to prove.
Now let us assume that the statement holds for any (classical) Gauss diagram
with less than $k$ arrows, and let $G$ be a Gauss diagram with $k$ arrows. If
$k<n$, then $A_{n,m}(G)=0$ and we are done, so we may assume that $k\ge n$.
Suppose that the base point lies on the $i$-th component of $G$. We should prove
that we may move the base point across any arrowhead or arrowtail on the $i$-th
component, and to shift it to any other, say, $j$-th, component.
Denote by $G_1,\ldots,G_7$ Gauss diagrams which differ only in a fragment
which looks like
$$\ris{-4}{-3}{135}{G_ij}{-1.1}$$
respectively. It suffices to prove that we have:

\begin{equation}\label{eq:basept}
A_{n,m}(G_1)=A_{n,m}(G_2)\ ,\hspace{0.2cm}
A_{n,m}(G_3)=A_{n,m}(G_4)\ ,\hspace{0.2cm} A_{n,m}(G_3)=A_{n,m}(G_5).
\end{equation}

Denote by $\a$ the arrow appearing in the fragment above.
The first equality is immediate; indeed, in $G_1$ and $G_2$ there are no
ascending states with one boundary component which contain $\a$ (and
all other ascending states are in a bijective correspondence).

To prove the second equation in \eqref{eq:basept}, note that there is a
bijection between ascending states of $A_{n,m}(G_3)$ and $A_{n,m}(G_4)$
which do not contain $\a$; the remaining ascending states of $G_3$ and $G_4$
look exactly like the ones of $G_6$ and $G_7$, respectively; thus we have
$$A_{n,m}(G_3)-A_{n,m}(G_4)=A_{n-1,m_0}(G_6)-A_{n-1,m_0}(G_7)=0,$$
where $m_0=m\pm 1$ is the number of circles in $G_6$ and $G_7$ and the
second equality holds by the induction hypothesis. It is worth to mention that since $G_3$ and $G_4$ are diagrams
of classical links, then so are $G_6$ and $G_7$.

The proof of the last equality in \eqref{eq:basept} is more complicated.
We will use an inner induction on the number $r$ of arrows which have
only their arrowtail on the $i$-th component. If $r=0$, then $A_{n,m}(G_3)=A_{n,m}(G_5)=0$. Indeed, for $r=0$ there are no
ascending states in $G_5$ (since we cannot reach the $i$-th circle), so $A_{n,m}(G_5)=0$.
Also, since $r=0$, $i$-th component of the link is under all other components,
so we may move it apart by a finite sequence of Reidemeister moves $\Omega_2$
and $\Omega_3$ applied away from the base point, converting $G_3$ into a Gauss
diagram with an isolated $i$-th component  (here we use the fact that $G_3$ is
associated with a classical link diagram); thus $A_{n,m}(G_3)=0$ by Theorem
\ref{thm:Reidemeister-basepoint}.

Let's establish the step of induction. On both $G_3$ and $G_5$ pick the same
arrow, which has only its arrowtail on the $i$-th component, and apply the skein
relation of Theorem \ref{thm:skein-relation} to simplify the corresponding Gauss
diagrams. Diagrams on the right-hand side of the skein relation have less than
$k$ crossings, so the right-hand sides are equal by the induction on $k$; the
remaining terms in the left-hand side are also equal by induction on $r$.
\end{proof}

\begin{cor}\label{cor:Chmutov-Conway}
Let $G$ be a Gauss diagram of an $m$-component classical link $L$.
Then
$$\nA(G)=\nD(G)=\nabla(L),$$
i.e. for every $n$ we have $A_{n,m}(G)=D_{n,m}(G)=c_n(L)$.
\end{cor}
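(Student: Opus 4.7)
The plan is to identify $\nA$ and $\nD$ with $\nabla$ by matching them as invariants satisfying the Conway skein relation and agreeing on unlinks, then invoking the standard uniqueness of $\nabla$.

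First I collect the tools. By Theorem~\ref{thm:Reidemeister-basepoint} together with Theorem~\ref{thm:basepoint-independence}, in the classical case both $\nA(G)$ and $\nD(G)$ depend only on the underlying link $L$ and not on the choice of diagram or of base point. Theorem~\ref{thm:skein-relation} supplies the Conway skein relation, and base-point independence is exactly what allows us to apply it at \textit{any} crossing of a classical link diagram: given a crossing, slide the base point away from it, apply the skein relation there, and then remember that the resulting values do not depend on where the base point sat.

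Second, I verify the normalization on unlinks. Let $G$ be the Gauss diagram of the $m$-component unlink $O_m$, consisting of $m$ bare circles with no arrows. The only arrow diagrams $A$ admitting a homomorphism $A\to G$ are those with no arrows. For $m=1$, the empty diagram belongs to $\A_{0,1}\cap\D_{0,1}$ and contributes the single homomorphism of sign $+1$, so $\nA(G)=\nD(G)=1=\nabla(O_1)$. For $m\geq 2$, any $A\in\A_{n,m}\cup\D_{n,m}$ is by definition connected, and a connected graph on $m\geq 2$ vertices requires at least $m-1\geq 1$ edges; therefore no such $A$ can map into the arrowless $G$, giving $A_{n,m}(G)=D_{n,m}(G)=0$ for every $n$, in agreement with $\nabla(O_m)=0$.

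Third, I descend by a standard crossing-change induction. Given any diagram of $L$, choose a sequence of crossing changes that converts it into an unlink; repeated application of the Conway skein relation then expresses the invariant as a $\Z[z]$-linear combination of values on unlinks. Since the Conway polynomial is uniquely determined by its skein relation and its values on unlinks, applying this argument to the differences $\nA(G)-\nabla(L)$ and $\nD(G)-\nabla(L)$ forces both to vanish. Extracting the coefficient of $z^n$ yields $A_{n,m}(G)=D_{n,m}(G)=c_n(L)$.

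The main obstacle is conceptual rather than computational: one must be sure that the skein relation may be invoked at any crossing (which is precisely what Theorem~\ref{thm:basepoint-independence} is for) and that the normalization on multi-component unlinks comes out to $0$. The latter is where the connectedness hypothesis built into the definitions of $\A_{n,m}$ and $\D_{n,m}$ does the crucial work, automatically matching the vanishing of $\nabla$ on split links of $\geq 2$ components.
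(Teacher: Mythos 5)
Your proposal is correct and follows essentially the same route as the paper: combine Theorems \ref{thm:Reidemeister-basepoint}--\ref{thm:basepoint-independence} to get link invariants satisfying the Conway skein relation, check the normalization, and invoke the standard uniqueness of $\nabla$. The only (harmless) difference is that you verify the values on all unlinks $O_m$ directly via connectedness of the arrow diagrams, whereas the paper checks only the unknot and lets the skein relation force the split-link values.
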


\begin{proof} 
By Theorems \ref{thm:Reidemeister-basepoint} -- \ref{thm:basepoint-independence},
$A_{n,m}(G)$ and $D_{n,m}(G)$ are link invariants which satisfy the same
skein relation as $c_n(L)$. It remains to compare their normalization, i.e. their
values on the unknot $O$. A standard Gauss diagram $G(O)$ of an
unknot consists of one circle with no arrows. It follows that $D_{0,1}(G(O))=A_{0,1}(G(O))=1$,
and $D_{n,m}(G(O))=A_{n,m}(G(O))=0$ otherwise. Hence $A_{n,m}(G)$ and
$D_{n,m}(G)$ have the same normalization as $c_n(L)$ and the corollary follows.
\end{proof}

\begin{ex}\rm
Consider a $3$-component link $L$ and the Gauss diagram $G$ of $L$
shown below:
$$\ris{+1}{-1}{30}{3-comp-link}{-1.1}\quad\quad\quad\quad
\ris{+1}{-1}{53}{3-comp-Gauss}{-1.1}$$
The only ascending state of $G$ is $\{3,4\}$. It sign is $+$. Thus
$c_2(L)=1$ and $c_n(L)=0$ for all $n\neq 2$, so $\nabla(L)=z^2$.
\end{ex}

\subsection{Alexander-Conway polynomials of long virtual links}
In this subsection we study properties of the polynomials $\nA$ and $\nD$
for long virtual links, and compare them with other existing constructions.

Let $L$ be a classical or long virtual link and $G$ be any Gauss diagram of $L$.
Polynomials $\nA(L):=\nA(G)$ and $\nD(L):=\nD(G)$ were defined in \cite{CKR},
but the proof that they are well defined for classical links with more than $2$
components and for long virtual links was not presented. By Theorem
\ref{thm:Reidemeister-basepoint} and Corollary \ref{cor:Chmutov-Conway},
$\nA(L)$ and $\nD(L)$ are invariants of $L$, and if $L$ is a classical link
$$\nA(L)=\nD(L)=\nabla(L).$$
Note that for long virtual links it may happen that
$$\nA(L)\neq\nD(L).$$
For example, let $G$ be a Gauss diagram of the long virtual Hopf link $L$
shown in Figure \ref{fig:trefoil+hopf+virtual}. Then $\nA(L)=z$, but $\nD(L)=0$.
We denote by
$$\nAD(L):=\nA(L)-\nD(L).$$
This polynomial vanishes on classical links but, as we will see below,
may be used to distinguish virtual links from classical links.

Let $D$ be a diagram of a (long) virtual link $L$ and $G$ its corresponding
Gauss diagram. Pick a classical crossing on $D$. A move on $D$ and the
corresponding move on $G$ shown in Figure \ref{fig:Virtualization-move}
is called the \textit{virtualization} move.

\begin{figure}[htb]
\centerline{\includegraphics[height=1.2in]{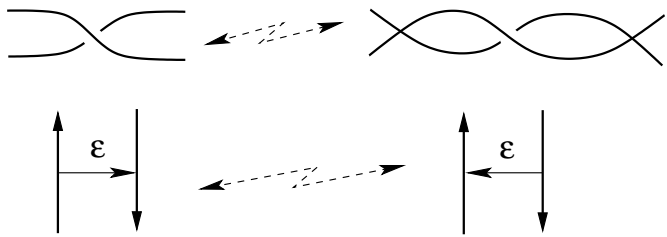}}
\caption{\label{fig:Virtualization-move} The virtualization move.}
\end{figure}

\begin{thm} \label{thm:Vassiliev-GPV}
Let $L$ and $L_1$ be long virtual links. Then the following holds.
\begin{enumerate}
\item
$\nA(L\#L_1)=\nA(L)\nA(L_1)\ ,\quad \nD(L\#L_1)=\nD(L)\nD(L_1)$, where $L\#L_1$ denotes a long virtual link which is a connected sum of $L$ and $L_1$.
\item
Non-trivial coefficients of $\nD$ and $\nA$ are not invariant under the virtualization move
\item
Coefficients of $\nD(L)$ and $\nA(L)$ are Vassiliev invariants in the
sense of both GPV \cite{GPV} and Kauffman \cite{Ka}.
\end{enumerate}
\end{thm}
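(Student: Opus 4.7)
The plan is to address the three claims in the order stated, with claim (1) carrying most of the real content and claims (2), (3) following from a direct example and from standard Gauss-diagram Vassiliev machinery.

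For (1), I would use that $G \# G_1$ is obtained by splicing the long circles of $G$ (with $m$ components) and $G_1$ (with $m'$ components) at the base point, so every arrow of $G \# G_1$ lies entirely in one of the two halves. Given $A \in \A_{n,m+m'-1}$ and $\phi \in \Hom(A, G\#G_1)$, I split $S(\phi)$ into $S_0$ (arrows landing in $G$) and $S_1$ (arrows landing in $G_1$), and correspondingly decompose $A = A_0 \# A_1$ by cutting its long circle at the splice. The surface $\S(A)$ then decomposes as $\S(A_0) \cup \S(A_1)$ glued along a single boundary arc through the splice, and the arc-gluing formula
\begin{equation*}
b(\S(A)) = b(\S(A_0)) + b(\S(A_1)) - 1
\end{equation*}
combined with $b(\S(A))=1$ forces each $\S(A_i)$ to have exactly one boundary component. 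The boundary tracing of $\S(A)$ restricts on each half to the boundary tracing of $\S(A_i)$ (starting from its natural base point, which for $A_1$ is the splice), so the ascending property is inherited by both $A_0$ and $A_1$. The converse concatenation is straightforward, giving a bijection under which signs multiply and arrow counts add; this produces the convolution identity
\begin{equation*}
A_{n,m+m'-1}(G\#G_1) = \sum_{n_0+n_1=n} A_{n_0,m}(G)\, A_{n_1,m'}(G_1),
\end{equation*}
which is multiplicativity of $\nA$. The argument for $\nD$ is identical.

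For (2), a single explicit example suffices. I would take a long Gauss diagram $G$ with a nonzero higher coefficient of $\nA$ (for instance the long trefoil, where $\nA = 1+z^2$ by Corollary \ref{cor:Chmutov-Conway}), apply the virtualization move to one arrow, and enumerate the ascending diagrams in $\A_{2,1}$ mapping into the two Gauss diagrams; since virtualization reverses the arrow, the homomorphism realising the $z^2$ coefficient disappears. A symmetric example handles $\nD$.

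For (3), I would invoke the standard Goussarov--Polyak--Viro principle that a Gauss-diagram pairing $\AG$ supported on arrow diagrams with exactly $n$ arrows is Vassiliev of order $\le n$. Extending $A_{n,m}$ multilinearly to singular long virtual links by $v(K_\times) = v(K_+) - v(K_-)$ at each classical double point, the difference $\AG_+ - \AG_-$ retains only those states whose image contains the arrow at the resolved crossing; iterating over $N$ double points forces contributing states to contain all $N$ marked arrows, which is impossible whenever $N > n$. This gives the GPV bound and, since virtual crossings contribute no arrow to the Gauss diagram, the Kauffman variant follows from the same argument.

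The main obstacle is the bookkeeping in (1): verifying cleanly that the splice decomposes $\S(A)$ as an arc-gluing, that boundary components and the ascending condition descend to both halves, and that the concatenation inverse is well-defined. The examples in (2) and the Vassiliev reduction in (3) are essentially formal once (1) is in hand.
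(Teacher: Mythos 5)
Your treatments of parts (1) and (3) are essentially sound. In fact (1) is worked out in more detail than the paper, which simply declares multiplicativity straightforward from the definitions; your arc-gluing count $b(\S(A))=b(\S(A_0))+b(\S(A_1))-1$ and the splitting of the boundary tracing at the splice are exactly the right bookkeeping. In (3) your cancellation argument is the same mechanism the paper invokes by citing GPV; just note that the extension $v(K_\times)=v(K_+)-v(K_-)$ you write down is the Kauffman notion, while the GPV notion resolves semi-virtual crossings via $v(K_+)-v(K_{\mathrm{virtual}})$ (deleting the arrow); the same state-cancellation works verbatim in both settings, so this is a labelling slip rather than a gap.

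The genuine gap is in part (2). The statement asserts that \emph{every} non-trivial coefficient of $\nA$ and $\nD$ fails to be invariant under the virtualization move, i.e. for each degree $n$ in which $A_{n,m}$, $D_{n,m}$ are non-trivial invariants one must exhibit a pair of Gauss diagrams related by a virtualization on which that particular coefficient takes different values. Your single trefoil example only addresses the coefficient of $z^2$: it says nothing about $A_{n,1}$, $D_{n,1}$ for $n>2$, nor about odd $n$ (which are non-trivial for links with more than one component). This is precisely where the paper does real work: for each even $n=2k$ it produces a pair of $(n+1)$-arrow Gauss diagrams of long virtual knots differing by one virtualization (Figure \ref{fig:Virtualization-move-Gauss}) and checks $A_{n,1}(G)=-1$, $D_{n,1}(G)=1$ while $A_{n,1}(G_1)=D_{n,1}(G_1)=0$, and the odd-$n$ case is then obtained by adding a Hopf-linked unknot to these diagrams. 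To repair your argument you would need such a family of examples covering all degrees (or an argument that, via the multiplicativity of (1), the degree-$2$ failure propagates to each higher coefficient — but that would still have to be verified coefficient by coefficient and would not by itself handle odd $n$).
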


\begin{proof} The proof of $(1)$ is straightforward and
follows from the definition of $\nA$ and $\nD$.

Now we prove $(2)$. Consider even $n$ first. Let $n=2k$ and let $G$
and $G_1$ be Gauss diagrams of long virtual knots shown in Figure
\ref{fig:Virtualization-move-Gauss}a and  \ref{fig:Virtualization-move-Gauss}b
respectively. Note that $G$ and $G_1$ differ by an application of the
virtualization move. Both $G$ and $G_1$ have $n+1$ arrows. It is
easy to check that
$$A_{n,1}(G)=-1  \quad D_{n,1}(G)=1, \quad \textrm{but}\quad A_{n,1}(G_1)=D_{n,1}(G_1)=0.$$

\begin{figure}[htb]
\centerline{\includegraphics[height=2.2in]{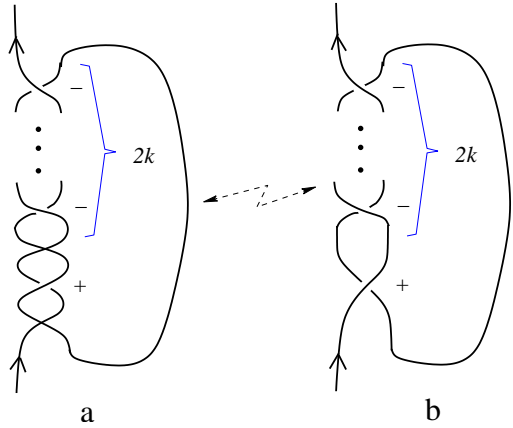}}
\caption{\label{fig:Virtualization-move-Gauss}Diagrams of long virtual
knots that differ by an application of the virtualization move.}
\end{figure}

To prove the statement for odd $n$, add a Hopf-linked unknot to the above diagrams.

Now we prove $(3)$. It is enough to prove that $A_{n,m}(G)$ and $D_{n,m}(G)$
are GPV finite type invariants, because any GPV finite type invariant is
automatically of Kauffman finite type. But \cite[page 12]{GPV} implies,
that any invariant given by an arrow diagram formula with $n$ arrows is
GPV finite type of degree $n$.
\end{proof}

\begin{rem}\rm
It follows from \cite[Theorem 1.1]{C}, that $(2)$ in Theorem
\ref{thm:Vassiliev-GPV} follows from $(3)$.
\end{rem}

Let $K_T$ be a virtual knot with a virtual diagram shown in Figure
\ref{fig:Kishino-knot}. This knot is called the Kishino knot. It has attracted
attention for its remarkable property that it is a connected sum of two
diagrams of the trivial knot; it has trivial Jones polynomial, $Z'_{K_T}(t,y)=0$
(for the definition of $Z'$ and its properties see \cite{Saw} and Paragraph 3.4), and the virtual knot group of $K_T$
is isomorphic to $\Z$, see \cite{Ki}. It was first proved to be non-classical
in \cite{Ki}. We show that $K_T$ is non-classical using the polynomials
$\nA$, $\nD$ and $\nAD$.

\begin{figure}[htb]
\centerline{\includegraphics[height=0.8in]{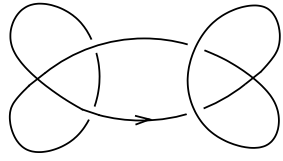}}
\caption{\label{fig:Kishino-knot} A diagram of the Kishino knot.}
\end{figure}

\begin{prop} \label{prop:Kishino-knot}
Polynomials $\nA$, $\nD$ and $\nAD$ detect
the fact that $K_T$ is a non-classical knot.
\end{prop}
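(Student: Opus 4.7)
The plan is to detect non-classicality by explicit evaluation of the three polynomials on a based Gauss diagram $G$ of $K_T$ and then by comparing the output with the constraints that Corollary \ref{cor:Chmutov-Conway} forces on classical knots. First, I fix a base point and read off from Figure \ref{fig:Kishino-knot} the based Gauss diagram $G$, which consists of a single circle carrying four signed arrows. The key theoretical leverage is that, were $K_T$ classical, Corollary \ref{cor:Chmutov-Conway} would force $\nA(K_T)=\nD(K_T)=\nabla(K_T)$, and $\nabla$ of any classical knot is a polynomial in $z^2$ with constant term $1$. So any one of the following suffices to establish non-classicality: $\nAD(K_T)\not\equiv 0$; $\nA(K_T)$ has a nonzero coefficient in an odd power of $z$; or the constant term of $\nA(K_T)$ or $\nD(K_T)$ differs from $1$.

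Second, I would enumerate all based connected arrow subdiagrams of $G$ whose associated surface has exactly one boundary component. There are at most $2^4=16$ subsets of the four arrows; the parity Remark following the construction of $\S(A)$ (forcing $n$ even when $m=1$) immediately discards the odd-cardinality subsets, leaving only subsets of size $0$, $2$, and $4$. For each surviving candidate I would check connectedness, count the boundary components of $\S(A)$, and classify the induced state as ascending, descending, or neither via the tracing criterion in the ascending/descending definitions. Summing $\s(\phi)$ within each class yields $A_{n,1}(G)$ and $D_{n,1}(G)$, hence $\nA(K_T)$, $\nD(K_T)$ and $\nAD(K_T)$; at that point I would point to the particular entry that violates one of the three necessary conditions above.

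The main obstacle is the careful combinatorial book-keeping of the tracings starting from the chosen base point on $G$: for each candidate state $S$ one must determine whether the induced tracing approaches every $\a\in S$ first at its head (ascending) or first at its tail (descending), while simultaneously verifying that $\S(A)$ has a single boundary component. Since this depends delicately on the cyclic order of arrow endpoints and on the choice of base point, I would fix an explicit Gauss code for $K_T$ from Figure \ref{fig:Kishino-knot} before enumerating. As a cross-check, I would apply the skein relation of Theorem \ref{thm:skein-relation} at one classical crossing of $K_T$ and verify consistency of the resulting identity among the computed polynomials of the three diagrams $G_+$, $G_-$, $G_0$.
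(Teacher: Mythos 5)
Your proposal is correct and amounts to the same computation the paper performs, but via a slightly different criterion. The paper's proof computes $\nA$, $\nD$, $\nAD$ for \emph{two} based Gauss diagrams $G$ and $\widehat{G}$ of $K_T$ differing only in the position of the base point (Figure \ref{fig:Kishino-Gauss}) and concludes non-classicality from the base-point \emph{dependence} of the values, invoking Theorem \ref{thm:basepoint-independence}; you instead fix a single base point and invoke Corollary \ref{cor:Chmutov-Conway}, detecting non-classicality from $\nAD\neq 0$. This single-base-point version is precisely what the remark following the proposition in the paper says is sufficient, so your route is sound and no less rigorous than the paper's. Two caveats on your plan: first, of your three proposed violations, only $\nAD\neq 0$ can ever occur, since for a one-circle diagram the constant terms of $\nA$ and $\nD$ are automatically $1$ (only the empty state contributes) and all odd coefficients vanish by the parity remark (one boundary component forces the number of arrows to be even when $m=1$); hence the entire burden rests on $\nAD\neq 0$, which does hold for the base points used in the paper (e.g.\ $\nAD(G)=-2z^2+z^4$) but is not guaranteed a priori for an arbitrary choice of base point, so you should be prepared to try another base point --- the paper's two-base-point comparison is marginally more robust, since any base-point dependence already contradicts classicality. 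Second, your enumeration scheme (even-size subsets of the four arrows, one-boundary-component check, ascending/descending classification via the tracing) is the right bookkeeping, and connectedness is automatic because $G$ has a single circle; the skein-relation cross-check via Theorem \ref{thm:skein-relation} is a reasonable safeguard but not needed for the argument.
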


\begin{proof}
Recall that for any Gauss diagram $G$ of a classical knot all these
polynomials are independent of the position of the base point.
Consider two Gauss diagrams $G$ and $\widehat{G}$ of $K_T$ which differ
only by the position of the base point, see Figure \ref{fig:Kishino-Gauss}.

\begin{figure}[htb]
\centerline{\includegraphics[height=0.7in]{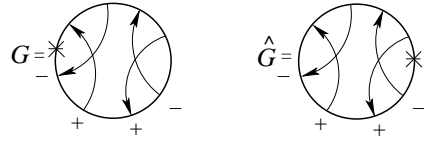}}
\caption{\label{fig:Kishino-Gauss} Gauss diagrams of the Kishino knot.}
\end{figure}

We have
$$\nA(G)=1-2z^2+z^4\quad \nD(G)=1\quad \nAD(G)=-2z^2+z^4\quad \textrm{but}$$
$$\nA(\widehat{G})=1\quad \nD(\widehat{G})=1-2z^2+z^4\quad \nAD(\widehat{G})=2z^2-z^4.$$
\end{proof}

\begin{rem}\rm
In order to prove Proposition \ref{prop:Kishino-knot}, i.e. to prove that the Kishino knot is not classical, it is enough to use Corollary \ref{cor:Chmutov-Conway}, i.e. to show that $\nAD(G)\neq 0$, where $G$ is a Gauss diagram shown in Figure \ref{fig:Kishino-Gauss}.
\end{rem}

\subsection{Comparison with other constructions of
Alexander-Conway polynomials of virtual links}

In \cite{Saw} Sawollek associated to every link diagram $D$ of a virtual
link $L$ a Laurent polynomial $Z'_D(t,y)$ in two variables $t,y$. He proved
that $Z'_D(t,y)$ is an invariant of virtual links up to multiplication by powers
of  $t^{\pm 1}$, and that it vanishes on classical links. He also showed that
$Z'_D(t,y)$ satisfies the following skein relation:
$$Z'_{D_+}(t,y)-Z'_{D_-}(t,y)=(t^{-1}-t)Z'_{D_0}(t,y),$$
where $D_+, D_-, D_0$ is a Conway triple of diagrams shown in Figure \ref{fig:triple}.
It is obvious that both $\nA(L)$ and $\nD(L)$ are crucially different from
$Z'_L(t,y)$ because both of them do not vanish on classical links, but one
can suspect that $\nAD(L)$ coincides with $Z'_L(t,y)$ after a possible
renormalization and a change of variables $z=t^{-1}-t$. Sawollek proved
the following theorem:

\begin{thm}[\cite{Saw}]\label{thm:disconnected-Sawolik}
Let $D$, $D_1$, $D_2$ be virtual link diagrams and let $D_1\sqcup D_2$
denote the disconnected sum of the diagrams $D_1$ and $D_2$. Then
$$Z'_{D_1\sqcup D_2}(t,y)=Z'_{D_1}(t,y)Z'_{D_2}(t,y).$$
\end{thm}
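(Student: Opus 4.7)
The plan is to use Sawollek's matrix-theoretic construction of $Z'_D(t,y)$, under which the multiplicativity under disconnected sum reduces to the multiplicativity of determinants for block-diagonal matrices. Recall that $Z'_D(t,y)$ is defined (up to a power of $t$) from the matrix $M(D)-I$, where $M(D)$ has rows and columns indexed by the arcs of $D$: each classical crossing contributes a $2\times 2$ block with coefficients in $\Z[t^{\pm1},y^{\pm1}]$ relating the two outgoing arcs to the two incoming arcs, each virtual crossing permutes the labels, and all other entries in each such row are zero.

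First I would label the arcs of $D_1$ by $a_1,\dots,a_p$ and those of $D_2$ by $b_1,\dots,b_q$, and observe that since the disconnected sum $D_1\sqcup D_2$ introduces no new crossings, its arc set and crossing set are disjoint unions of those of $D_1$ and $D_2$, and every crossing-relation involves arcs of only one summand. Ordering the generators with the $a_i$ before the $b_j$, the matrix $M(D_1\sqcup D_2)-I$ becomes block diagonal of the form
\[
\begin{pmatrix} M(D_1)-I & 0 \\ 0 & M(D_2)-I \end{pmatrix},
\]
so that $\det(M(D_1\sqcup D_2)-I)=\det(M(D_1)-I)\cdot\det(M(D_2)-I)$, which is the heart of the multiplicativity statement.

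The hard part will be matching the normalization. The polynomial $Z'_D$ is only well-defined up to multiplication by powers of $t$, and in most formulations one must pass from $\det(M(D)-I)$ to $Z'_D$ by deleting a suitable row and column and/or dividing by a factor depending on the number of components, so that $Z'$ vanishes on classical links. I would verify that these deletions can be made compatibly in the two blocks, and that any component-dependent correction factor is additive under disjoint union, so that after exponentiation it becomes multiplicative and matches the factorization of determinants. Once the normalization is accounted for, the identity $Z'_{D_1\sqcup D_2}(t,y)=Z'_{D_1}(t,y)\cdot Z'_{D_2}(t,y)$ follows as claimed.
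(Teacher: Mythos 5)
The paper does not actually prove this statement: it is quoted from Sawollek's paper \cite{Saw} purely as an external input, used only to distinguish $\nAD$ from $Z'$. So there is no internal proof to compare against, and your sketch should be judged against Sawollek's own argument --- which it essentially reproduces. The core of your proposal is correct and is the right mechanism: for a disconnected sum no crossing relates arcs of different summands, so with a suitable ordering of the generators the defining matrix is block diagonal, the determinant factors, and (since the writhe is additive, $w(D_1\sqcup D_2)=w(D_1)+w(D_2)$) the sign normalization $(-1)^{w(D)}$ is multiplicative as well. The one place where you are off the mark is the ``hard part'' you anticipate: Sawollek's polynomial is defined as the \emph{full} determinant of the matrix built from the local crossing matrices and the permutation recording how arcs connect them --- no row or column is deleted and no component-dependent correction factor is introduced. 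The vanishing on classical links is a theorem about that full determinant, not something enforced by a normalization. This matters, because a deleted-row-and-column normalization would in general ruin multiplicativity under disjoint union (exactly as the classical Alexander polynomial of a split link vanishes rather than factoring), so if your hedged verification step were really needed the argument could fail; fortunately it is vacuous. Also note that the theorem is stated for diagrams, so $Z'_{D}$ is an honest polynomial here and the ``up to powers of $t$'' ambiguity (which only arises when passing from diagrams to link invariants) plays no role: the identity holds on the nose once the block-determinant computation and the additivity of the writhe are in place.
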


Note that $\nAD(L\sqcup L)=0$ for any long virtual link $L$, but
$Z'_{L\sqcup L}(t,y)=(Z'_L(t,y))^2$. Thus $\nAD(L)$ is also different from $Z'_L(t,y)$.

Other generalizations of Alexander polynomials to virtual links
are derived from the virtual and extended virtual link groups,
see \cite{Saw} and \cite{SW, SW1} respectively.

\textbf{1.} Following \cite{Saw} we denote by $\Delta_L(t)$ a polynomial
which is derived from the virtual link group of a link $L$. It is well defined
up to sign and multiplication by powers of $t^{\pm 1}$. For every virtual
link diagram $D$ the associated polynomial is denoted by $\Delta_D(t)$.
In contrast to the classical Alexander polynomial, the Alexander polynomial
of \cite{Saw} for virtual links does not satisfy any linear skein relation as
stated in the next theorem:

\begin{thm}[\cite{Saw}]\label{thm:no-skein-Sawolik}
For any normalization $A_D(t)$ of the polynomial $\Delta_D(t)$, i.e.,
$A_D(t)=\eps_Dt^{n_D}\Delta_D(t)$ with some $\eps_D\in\{-1,1\}$ and
$n_D\in\Z$, the equation
$p_1(t)A_{D_+}(t) + p_2(t)A_{D_-}(t) + p_3(t)A_{D_0}(t) = 0$ with
$p_1(t),p_2(t),p_3(t)\in\Z[t^{\pm 1}]$ has only the trivial solution
$p_1(t) = p_2(t) = p_3(t) = 0$.
\end{thm}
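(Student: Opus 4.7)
My plan is to rule out the existence of any nontrivial skein relation by computing $\Delta_D(t)$ explicitly on a short list of virtual diagrams, then extracting an overdetermined linear system on $(p_1,p_2,p_3)$ from a carefully chosen family of Conway triples. Since $\Delta_D$ is determinantal---it can be read off as a minor of a presentation matrix of the virtual link group---this computation is purely mechanical on any given diagram. I would tabulate $\Delta_D$ for, say, the unknot, the virtual Hopf link, the virtual trefoils, the Kishino knot $K_T$, and their connected sums with small classical knots (exploiting the multiplicativity from Theorem \ref{thm:disconnected-Sawolik}).

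Next I would pick Conway triples whose three diagrams all lie in this tabulated family. Triples of \emph{classical} diagrams carry no information, because $\Delta_D$ vanishes on classical links, so that the alleged identity reduces to $0=0$. The useful triples are those that change a crossing of a virtual Hopf link, of a virtual trefoil, of the Kishino diagram, or of a connected sum of a classical diagram with a virtual one; in each case one or more of $D_+,D_-,D_0$ is essentially virtual, and the identity $p_1(t)A_{D_+}+p_2(t)A_{D_-}+p_3(t)A_{D_0}=0$ becomes a nontrivial relation in $\Z[t^{\pm 1}]$.

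The main obstacle is the per-diagram normalization freedom $A_D=\eps_D t^{n_D}\Delta_D$, which a priori hides three extra unknowns $(\eps_{D_+},n_{D_+}),(\eps_{D_-},n_{D_-}),(\eps_{D_0},n_{D_0})$ inside each triple. The plan to neutralize it is to assemble the triples so that individual diagrams appear in several of them with the \emph{same} normalization, so that the unknown signs and monomial shifts can be pinned down (or cancelled) by comparing across triples. Once the normalizations are tracked, I would examine the resulting Laurent identities at their extremal degrees in $t$: each extremal coefficient of $\Delta_{D_i}$ contributes a scalar equation in the extremal coefficients of $p_i(t)$, and a few triples should produce a system whose only solution is $p_1=p_2=p_3=0$.

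The subtle combinatorial point---and the step I expect to be hardest---is choosing the triples so that the polynomials $\{\Delta_{D_+},\Delta_{D_-},\Delta_{D_0}\}$ arising from them are $\Z[t^{\pm 1}]$-linearly independent even after arbitrary monomial rescalings $t^{m_i}\Delta_{D_i}$. Phrased differently, the examples must produce $\Delta$-values whose supports (the sets of exponents of $t$ with nonzero coefficient) cannot be aligned by a single common shift. Once such a family is in hand, a finite degree-by-degree comparison forces all $p_i$ to vanish, which is the desired conclusion of Theorem \ref{thm:no-skein-Sawolik}.
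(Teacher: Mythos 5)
There is a genuine gap, and it begins with a confusion between the two invariants of \cite{Saw}. The polynomial $\Delta_D(t)$ in Theorem \ref{thm:no-skein-Sawolik} is the one derived from the virtual link group; on a classical diagram the virtual link group is the ordinary link group, so $\Delta_D$ restricts (up to units) to the classical Alexander polynomial and does \emph{not} vanish on classical links. What vanishes on classical links, and what enjoys the multiplicativity of Theorem \ref{thm:disconnected-Sawolik}, is the other invariant $Z'_D(t,y)$. Hence your two load-bearing claims --- that ``triples of classical diagrams carry no information because $\Delta_D$ vanishes on classical links,'' and that you can exploit ``the multiplicativity from Theorem \ref{thm:disconnected-Sawolik}'' --- are both about the wrong polynomial, and they would misdirect the whole computation. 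Classical triples are in fact far from uninformative: for instance split classical links have vanishing Alexander polynomial, and triples in which two of the three values vanish while the third does not are exactly the configurations that force a coefficient $p_i$ to be zero in the integral domain $\Z[t^{\pm 1}]$. By contrast the Kishino knot, whose virtual knot group is $\Z$, has trivial $\Delta_D$ and contributes essentially nothing here.

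Beyond this, what you give is a plan rather than a proof: no Conway triple is exhibited, no value of $\Delta_D$ is computed, and the two steps you yourself identify as hardest --- eliminating the per-diagram normalizations $\eps_D t^{n_D}$ and choosing triples whose polynomials admit no common relation --- are precisely the content of the theorem and are left unexecuted. Note also that a single triple can never force $p_1=p_2=p_3=0$ (take $p_1=A_{D_-}$, $p_2=-A_{D_+}$, $p_3=0$), so the ``extremal degree / support alignment'' argument must be run simultaneously over several triples sharing diagrams with consistent normalizations, with vanishing values doing the real work; none of that bookkeeping is carried out. Finally, be aware that the paper you are reading does not prove this statement at all --- it is quoted from \cite{Saw} --- so the benchmark is Sawollek's original argument, not anything in this text.
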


Since $\nA$, $\nD$ and $\nAD$ satisfy the Conway skein relation, it follows
that all of them are different from $\Delta_D(t)$ of \cite{Saw}.

\textbf{2.} Let $L$ be a virtual link. The polynomial $\Delta_1(L)$ of
\cite{SW, SW1} is a polynomial in variables $v,u$ and is well defined up to
multiplication by powers of $(uv)^{\pm 1}$. If $L$ is a classical link,
then $\Delta_1(L)$ is equal to the Alexander polynomial in the variable $uv$.
It follows that $\nAD$ is different from $\Delta_1$, because $\Delta_1$ is not
identically zero on the family of classical links.

Given a virtual link $L$, we denote by $L^*$ the mirror
image of $L$, i.e. a link obtained by inverting the sign of each classical
crossing in a diagram of $L$. The following corollary was proved in \cite{SW1}.

\begin{cor}[\cite{SW1}, Corollary 5.2]\label{cor:mirror-SW}
Let $L$ be a virtual $m$-component link. Then
$$\Delta_1(L)(u,v)=(-1)^m\Delta_1(L^*)(v,u)$$
up to multiplication by powers of $(uv)^{\pm 1}$.
\end{cor}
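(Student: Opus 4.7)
The plan is to work directly with the presentation of the extended virtual link group $\Gamma_L$ of Silver--Williams, in which each arc of a virtual diagram contributes a Wirtinger-type generator and every classical crossing contributes two defining relations involving the variables $u$ and $v$. The polynomial $\Delta_1(L)(u,v)$ is extracted as a suitable reduced determinant of the Fox-derivative matrix of these relations, well defined up to multiplication by $(uv)^{\pm 1}$.

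First I would write down explicitly the two relations attached to a positive classical crossing, and compare them with the two relations attached to the same crossing after switching over- and undercrossing (that is, in the mirror $L^*$). This local comparison should exhibit a symmetry: the defining relations of $\Gamma_{L^*}$ are obtained from those of $\Gamma_L$ by the involution $u\leftrightarrow v$, possibly after replacing some generators by their inverses and multiplying certain relations by units of the form $u^a v^b$. Consequently the Alexander matrix $M^{*}(u,v)$ of $L^{*}$ and the matrix $M(v,u)$ obtained from $L$ by swapping variables should differ by elementary row and column operations over $\Z[u^{\pm 1}, v^{\pm 1}]$.

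Next I would pass to the reduced Alexander matrix used to define $\Delta_1$. For an $m$-component link one deletes one relation per component; each such deletion contributes a sign that tracks the orientation and generator inversion induced by the mirror operation on that component. Summing these local contributions over all components should produce exactly the global factor $(-1)^m$, while any residual monomial factors $u^a v^b$ are absorbed into the $(uv)^{\pm 1}$ ambiguity. Taking determinants of the two matrices and comparing then yields the claimed identity.

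The main obstacle is the bookkeeping of signs and monomial units through the Fox calculus: one must verify that the elementary operations relating $M^{*}(u,v)$ to $M(v,u)$ really produce exactly $(-1)^m$ modulo units $(uv)^{\pm 1}$. The topological content is essentially local at each crossing, so the difficulty is combinatorial rather than conceptual; nevertheless it must be handled with care, since the two variables play asymmetric roles at every classical crossing and the final count must be shown to be independent of the chosen diagram of $L$, which would require checking invariance of the proposed sign count under the generalized Reidemeister moves.
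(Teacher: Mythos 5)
First, a point about context: the paper does not prove this statement at all --- it is quoted verbatim from Silver--Williams (\cite{SW1}, Corollary 5.2) and used as a black box to distinguish $\nA,\nD$ from $\Delta_1$. So there is no in-paper proof to compare against; the only meaningful comparison is with the original argument of \cite{SW1}, which indeed proceeds through the extended Alexander group and Fox calculus, so your general strategy is aimed in the right direction.

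That said, as a proof your proposal has genuine gaps rather than being a complete alternative argument. The two steps that carry all the content are exactly the ones you defer: (i) the claim that the crossing relations of $\Gamma_{L^*}$ are obtained from those of $\Gamma_L$ by the involution $u\leftrightarrow v$ up to inverting generators and multiplying by monomial units, and (ii) the claim that the reduction producing $\Delta_1$ converts these local symmetries into a global factor of exactly $(-1)^m$. Neither is verified, and (ii) in particular cannot be organized the way you describe: $\Delta_1$ of a virtual link is not a single reduced determinant obtained by ``deleting one relation per component'' of a Wirtinger-type presentation; it is the generator of the smallest principal ideal containing the first elementary ideal of the Jacobian of the extended group, i.e.\ a gcd of codimension-one minors, well defined only up to units. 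A sign bookkeeping argument that assigns one sign per deleted relation and ``sums over components'' therefore does not directly apply; one must show that \emph{every} relevant minor of $M^*(u,v)$ equals $(-1)^m$ times the corresponding minor of $M(v,u)$ up to $(uv)^{\pm 1}$, or else argue at the level of the elementary ideals under the ring involution $u\leftrightarrow v$. Finally, the worry you raise at the end --- invariance of the sign count under generalized Reidemeister moves --- is actually not needed if the comparison is done correctly, since $\Delta_1$ is already known to be an invariant up to the stated units; invoking it suggests the argument is being set up diagrammatically in a way that re-proves more than necessary. To close the gaps you would need to write out the two Fox-derivative rows at a classical crossing for $L$ and for $L^*$, exhibit the explicit matrix identity relating $M^*(u,v)$ to $M(v,u)$ (a permutation of generators composed with the variable swap and multiplication by a diagonal matrix of units, with determinant contributing $(-1)^m$), and then pass to elementary ideals.
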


In particular, for any virtual knot $\Delta_1(K)(u,v)=-\Delta_1(K^*)(v,u)$.

Consider a mirror pair of long virtual knots $K$ and $K^*$ with Gauss
diagrams $G$ and $G^*$ shown in Figure \ref{fig:mirror-pair}.

\begin{figure}[htb]
\centerline{\includegraphics[height=0.7in]{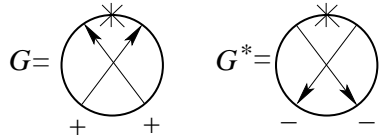}}
\caption{\label{fig:mirror-pair}A mirror pair of Gauss diagrams.}
\end{figure}

Then $\nA(K)=\nD(K^*)=1+z^2$, $\nA(K^*)=\nD(K)=1$. Thus both
$\nA$ and $\nD$ are different from $\Delta_1(L)$ in \cite{SW, SW1}.

Another way to see that both $\nA$ and $\nD$ are different from $\Delta_1$ is
by finding a virtual knot $K$, such that both $\nA$ and $\nD$ detect that this
knot is non-classical, but $\Delta_1(K)=1$ (so $\Delta_1$ does not distinguish
this knot from the unknot). An example of such a knot $K$, together with a pair
$G$ and $\widehat{G}$ of its Gauss diagrams which differ by the position of the
base point, is given in Figure \ref{fig:virtual-nontrivial-knot}. It was shown in
\cite{SW1} that $\Delta_1(K)=1$.

\begin{figure}[htb]
\centerline{\includegraphics[height=0.8in]{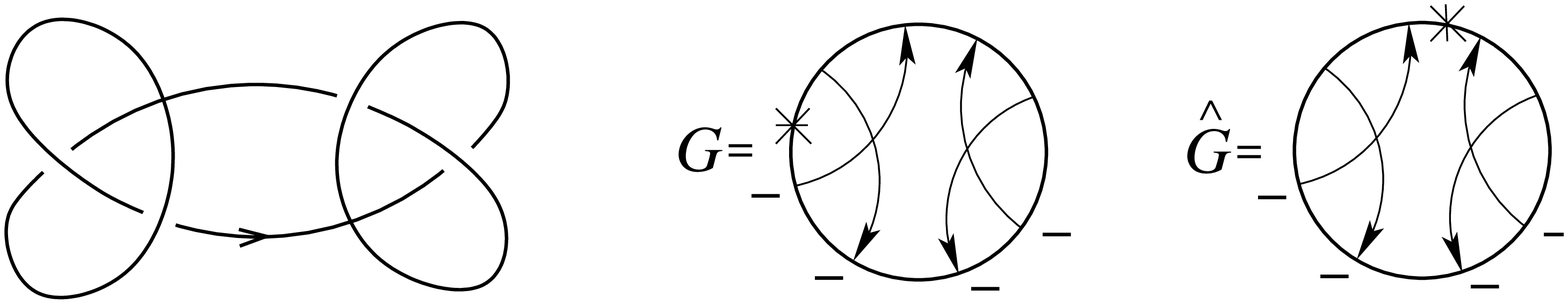}}
\caption{\label{fig:virtual-nontrivial-knot} A virtual knot and two of its based Gauss diagrams.}
\end{figure}

We have
\begin{align*}
&\nA(G)=1+z^2\hspace{2cm} &\nD(G)=1+z^2\qquad \textrm{but}\\
&\nA(\widehat{G})=1+2z^2+z^4 &\nD(\widehat{G})=1.\hspace{2.2cm}
\end{align*}

It follows that both $\nA$ and $\nD$ show that $K$ is non-classical.

Finally, another generalization of the Alexander polynomial (related to the
polynomial $Z'$ of \cite{Saw}) to long virtual knots was presented in \cite{A}.
It is a Laurent polynomial $\zeta$ in a variable $t$ over the following ring
$T=\mathbb{Z}[p,p^{-1},q,q^{-1}]/((p-1)(p-q),(q-1)(p-q))$. This polynomial
also vanishes on classical knots and thus $\zeta$ significantly differs from
$\nA$ and $\nD$.

\noindent\textbf{Question}. Is it possible to derive $\nAD$ from $\zeta$?

\section{Counting surfaces with two boundary components}
\label{sec-2-comp}

In this section we present a new infinite family of Gauss diagram formulas,
which correspond to counting of orientable surfaces with two boundary
components. At the end of this section we identify the resulting invariants
with certain derivatives of the HOMFLYPT polynomial.

\subsection{Link invariants and diagrams with two boundary components}
In this subsection we define invariants of classical links using
ascending and descending arrow diagrams with two boundary components.

Recall that for every Gauss diagram $G$ we defined notions of ascending and
descending separating states of $G$, see Subsection \ref{subsec-Enhanced-states}.
Also, for every ascending (respectively descending) separating state $S$ of $G$
and for an arrow diagram $A\in\A_{n,m}^2$ (respectively $A\in\D_{n,m}^2$)
we defined, in the same subsection, a notion of $S$-admissible pairing $\AG_S$.
Note that every ascending (respectively descending) separating state $S$ of $G$
defines two Gauss diagrams $G'_S$ and $G''_S$ as follows: $G'_S$ (respectively
$G''_S$) consists of all circles of $G_S$ labeled by 1 (respectively by 2), and its
arrows are arrows of $G$ with both ends on these circles. All arrows with ends
on circles of $G_S$ with different labels are removed. The base point on $G'_S$ is
the base point $*$ of $G$. The base point on $G''_S$ is placed near the first arrow
in $S$ which we encounter as we walk on $G$ starting from~$*$.

If $G$ is a Gauss diagram of a classical link $L$, then $G'_S$ and $G''_S$
correspond to classical links $L'_S$ and $L''_S$, which are defined as follows.
We smooth all crossings which correspond to arrows in $S$, as shown below:

\begin{equation*}
\ris{-4}{-3}{60}{Conway-smoothing}{-1.1}
\end{equation*}

We obtain a diagram of a smoothed link $L_S$ with labeling of components
induced from the labeling of circles of $G_S$. Denote by $L'_S$ and $L''_S$
sublinks which consist of components labeled by $1$ and $2$ respectively. Let
$m'$ and $m''$ denote the number of components of $L'_S$ and $L''_S$ respectively, and as usual let $G'_S, G''_S$ be the corresponding Gauss diagrams.  It is an immediate consequence of Definition \ref{D:G_S} that

\begin{equation*}
A^2_{n,m}(G)_S=\s(S)\sum_{i=0}^{n-|S|}A_{i,m'}(G'_S)A_{n-|S|-i,m'}(G''_S)
\end{equation*}

\begin{equation*}
D^2_{n,m}(G)_S=\s(S)\sum_{i=0}^{n-|S|}D_{i,m'}(G'_S)D_{n-|S|-i,m'}(G''_S),
\end{equation*}
where $|S|$ is the number of arrows in $S$ and $\s(S)=\prod_{\a\in S}\s(\a)$.

It follows from Corollary \ref{cor:Chmutov-Conway}
that for every $n\geq 0$ we have
$$A_{n,m'}(G'_S)=D_{n,m'}(G'_S)=c_n(L'_S)\quad \textrm{and}\quad A_{n,m''}(G''_S)=D_{n,m''}(G''_S)=c_n(L''_S)$$. 
As an immediate corollary we get

\begin{lem}\label{lem:essential-arrows}
Let $G$ be a Gauss diagram of an $m$-component link $L$.
Then for every $n\geq 0$ and an ascending (respectively descending)
separating state $S$ of $G$ we have
\begin{equation*}
A^2_{n,m}(G)_S=\s(S)\sum_{i=0}^{n-|S|}c_i(L'_S)c_{n-|S|-i}(L''_S)
\end{equation*}
\begin{equation*}
D^2_{n,m}(G)_S=\s(S)\sum_{i=0}^{n-|S|}c_i(L'_S)c_{n-|S|-i}(L''_S).
\end{equation*}
\end{lem}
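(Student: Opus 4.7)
The plan is to deduce the lemma by combining two ingredients: a product decomposition of the $S$-pairing into one-boundary-component pairings on the smoothed Gauss diagrams $G'_S$ and $G''_S$, together with Corollary \ref{cor:Chmutov-Conway} identifying such one-boundary pairings with coefficients of the Conway polynomial. Writing $m'$ and $m''$ for the numbers of circles of $G'_S$ and $G''_S$ respectively, the target intermediate identity (in the ascending case) is
$$A^2_{n,m}(G)_S \;=\; \s(S)\sum_{i=0}^{n-|S|} A_{i,m'}(G'_S)\,A_{n-|S|-i,m''}(G''_S),$$
and once this is in hand Corollary \ref{cor:Chmutov-Conway} gives $A_{i,m'}(G'_S)=c_i(L'_S)$ and $A_{n-|S|-i,m''}(G''_S)=c_{n-|S|-i}(L''_S)$, which is exactly the statement. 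The descending assertion is obtained by the same argument after replacing $\A$ by $\D$ and $A$ by $D$ throughout.

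To prove the factorization I would set up a bijection between $S$-admissible homomorphisms $\phi:A\to G$ with $A\in\A_{n,m}^2$ and pairs $(\phi':A'\to G'_S,\;\phi'':A''\to G''_S)$ with $A'\in\A_{i,m'}$ and $A''\in\A_{n-|S|-i,m''}$ for some $0\le i\le n-|S|$. The $S$-admissibility of $\phi$ forces $\phi(\sigma(A))=S$, so smoothing $A$ along its separating arrows $\sigma(A)$ produces a disjoint union of two based arrow diagrams $A'$ and $A''$ supported on the circles labeled $1$ and $2$, while smoothing $G$ along $S$ produces $G'_S\sqcup G''_S$; the restriction of $\phi$ to the $n-|S|$ non-separating arrows then splits as the two maps $\phi'$ and $\phi''$. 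Since $\s(\phi)=\prod_{\a\in\Im\phi}\s(\a)$ and $\Im\phi$ partitions as $S\sqcup\Im\phi'\sqcup\Im\phi''$, one has $\s(\phi)=\s(S)\s(\phi')\s(\phi'')$, and regrouping the sum over $(A,\phi)$ according to $(A',A'',\phi',\phi'')$ delivers the displayed factorization.

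The main obstacle is the verification that this bijection respects the ascending property and is compatible with the base-point conventions on both sides. Ascendingness of $A\in\A_{n,m}^2$ is defined by a walk along $\partial\S(A)$ that starts on the first boundary component at the base point and then jumps to the second boundary component precisely at the first separating arrow encountered; correspondingly $G''_S$ is based at the first arrow of $S$ met while walking from the base point of $G$. One must check that these two conventions identify under smoothing, so that $A''$ is genuinely ascending (first passage at the tail of every non-separating ribbon it contains) and that $\phi''$ is a homomorphism into the based diagram $G''_S$; the analogous check for $A'$ is immediate because its base point and walk are those of $A$. Once this matching is made, the factorization follows, and the lemma drops out of Corollary \ref{cor:Chmutov-Conway}; the descending case is literally identical after interchanging the roles of head and tail.
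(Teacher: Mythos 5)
Your proposal is correct and follows essentially the same route as the paper: the paper states the factorization $A^2_{n,m}(G)_S=\s(S)\sum_i A_{i,m'}(G'_S)A_{n-|S|-i,m''}(G''_S)$ (and its descending analogue) as an immediate consequence of Definition \ref{D:G_S}, and then applies Corollary \ref{cor:Chmutov-Conway} to $G'_S$ and $G''_S$, exactly as you do. Your spelled-out bijection between $S$-admissible homomorphisms and pairs $(\phi',\phi'')$, including the matching of base-point conventions on the second component, is just a more detailed justification of the step the paper declares immediate.
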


Summing over all ascending (descending) separating states $S$, we obtain

\begin{cor}\label{cor:enhanced-essential-arrows}
Let $G$ be any Gauss diagram of an $m$-component
link $L$. Then for every $n\geq 0$ we have
\begin{align*}
& A^2_{n,m}(G)=\sum_{k=1}^n\sum_{S,|S|=k}\s(S)\sum_{i=0}^{n-k}c_i(L'_S)c_{n-k-i}(L''_S) \\
& D^2_{n,m}(G)=\sum_{k=1}^n\sum_{S,|S|=k}\s(S)\sum_{i=0}^{n-k}c_i(L'_S)c_{n-k-i}(L''_S)
\end{align*}
where the second summation is over all ascending and descending
separating states $S$ of $G$ respectively.
\end{cor}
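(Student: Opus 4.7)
The plan is to combine Lemma \ref{lem:essential-arrows} with a decomposition of $A^2_{n,m}(G)$ indexed by ascending separating states of $G$. Unfolding the definition, $A^2_{n,m}(G) = \sum_{A \in \A^2_{n,m}} \sum_{\phi \in \Hom(A,G)} \s(\phi)$. The key observation, established immediately before Definition \ref{D:G_S}, is that each homomorphism $\phi : A \to G$ with $A \in \A^2_{n,m}$ induces a unique ascending separating state $S_\phi := \phi(\sigma(A))$ of $G$, with the arc labels of $G$ inherited from the boundary-component labels on $A$. Partitioning $\Hom(A,G)$ according to the induced state yields $\AG = \sum_S \AG_S$, where $S$ ranges over ascending separating states of $G$ and any state not realized by some $\phi$ contributes zero. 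Interchanging the two summations gives
\[
A^2_{n,m}(G) \;=\; \sum_{S} A^2_{n,m}(G)_S.
\]

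Next, I would invoke Lemma \ref{lem:essential-arrows} to rewrite each $A^2_{n,m}(G)_S$ as $\s(S)\sum_{i=0}^{n-|S|} c_i(L'_S)\,c_{n-|S|-i}(L''_S)$, and then reorganize the outer sum according to $k := |S|$. Since $A$ is a connected arrow diagram with two boundary components, the surface $\S(A)$ necessarily has at least one arrow whose ribbon is traversed only once by each boundary component; hence $|\sigma(A)| \geq 1$ and so $k \geq 1$. On the other hand, $k \leq n$, and any term with $k > n$ would have an empty inner sum and contribute zero. Collecting the contributions yields the stated identity
\[
A^2_{n,m}(G) \;=\; \sum_{k=1}^{n} \sum_{\substack{S\\ |S|=k}} \s(S) \sum_{i=0}^{n-k} c_i(L'_S)\,c_{n-k-i}(L''_S).
\]

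The argument for $D^2_{n,m}(G)$ is identical, replacing \emph{ascending} by \emph{descending} throughout and invoking the descending half of Lemma \ref{lem:essential-arrows}. I do not anticipate a serious obstacle: this is a clean bookkeeping step that re-packages Lemma \ref{lem:essential-arrows} summed over all separating states. The one point that deserves verification is that the assignment $\phi \mapsto S_\phi$ partitions $\Hom(A,G)$ exactly into the $S$-admissible subsets --- so that no homomorphism is double-counted and every homomorphism is captured by exactly one $S$ --- but this is immediate from the definitions of separating arrow, induced separating state, and $S$-admissibility.
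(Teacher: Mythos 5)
Your proposal is correct and matches the paper's argument: the paper obtains the corollary precisely by partitioning the homomorphisms according to their induced separating states and summing Lemma \ref{lem:essential-arrows} over all ascending (respectively descending) separating states $S$, exactly as you do. Your extra checks (that each $\phi$ induces a unique state, hence the $S$-admissible sets partition $\Hom(A,G)$, and that $1\leq |S|\leq n$) are just the bookkeeping the paper leaves implicit.
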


It turns out that both $A^2_{n,m}(G)$ and  $D^2_{n,m}(G)$ are
invariant under $\Omega_2$ and $\Omega_3$:

\begin{thm}\label{thm:invariance-2-3}
Let $G$ be any Gauss diagram of an $m$-component
link $L$. Then $A^2_{n,m}(G)$ and  $D^2_{n,m}(G)$ are
invariant under Reidemeister moves $\Omega_2$ and $\Omega_3$
which do not involve the base point.
\end{thm}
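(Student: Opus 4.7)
The plan is to combine Corollary \ref{cor:enhanced-essential-arrows} with a Reidemeister-by-Reidemeister analysis of separating states. The corollary rewrites both $A^2_{n,m}(G)$ and $D^2_{n,m}(G)$ as a signed sum
$$\sum_{S}\s(S)\sum_{i=0}^{n-|S|} c_i(L'_S)\,c_{n-|S|-i}(L''_S)$$
over ascending (resp.\ descending) separating states $S$ of $G$, where $L'_S$ and $L''_S$ are the two sublinks obtained by smoothing the crossings in $S$ and grouping components by the arc labels. Since the Conway coefficients $c_i$ are honest link invariants, it suffices to produce, for each Reidemeister move $G\rightsquigarrow\tG$ performed away from the base point, a sign- and $|S|$-preserving correspondence between separating states of $G$ and of $\tG$ which preserves the pair $(L'_S,L''_S)$ up to isotopy.

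For $\Omega_2$, let $\a_+$ and $\a_-$ denote the two new arrows of $\tG$, with $\s(\a_+)\s(\a_-)=-1$. I would partition separating states $\widetilde{S}$ of $\tG$ according to $\widetilde{S}\cap\{\a_+,\a_-\}$. Empty intersection: $\widetilde{S}$ comes from a unique separating state $S$ of $G$ by extending the arc labeling across the two new arcs in the only consistent way; the sign and $|S|$ are preserved, and the smoothed sublinks differ only by the same $\Omega_2$ performed away from the base points, hence have identical Conway coefficients. Singleton intersection: swapping $\a_+\leftrightarrow\a_-$ defines a fixed-point-free involution on such states, preserving arc labels and smoothed sublinks but flipping the sign, so these contributions cancel pairwise. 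Both arrows in $\widetilde{S}$: a short topological check on how two opposite parallel ribbons attach to $\S(A)$ --- using $\chi(\S(A))=m-n$ together with the two-boundary-component constraint --- should show that no admissible $A\in\A^2_{n,m}$ (resp.\ $\D^2_{n,m}$) admits a homomorphism onto such an $\widetilde{S}$, so this case contributes nothing.

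For $\Omega_3$, I would reuse the seven-entry table of tracing patterns from the proof of Theorem \ref{thm:Reidemeister-basepoint}. That table already supplies a sign-preserving bijection between the relevant states supported in the triangular fragment; what remains is to verify that the ascending/descending arc-label data transport correctly across the bijection. Since the arc labels are determined by which component of $\partial\S(A)$ traces each arc, and the bijection of the table is induced by a homeomorphism of the fragment boundary, the labels do match, and the smoothed sublinks $(L'_S,L''_S)$ and $(\widetilde{L}'_{\widetilde{S}},\widetilde{L}''_{\widetilde{S}})$ differ only by an $\Omega_3$ away from the base points. The main obstacle will be the both-arrows-in-$\widetilde{S}$ case of $\Omega_2$: in the one-boundary-component proof this case was excluded by a one-line Euler-characteristic argument, but with two boundary components the labelings give the constraint more room, so a careful case analysis --- or an explicit sign-cancelling involution that also keeps track of the location of the second base point --- will be required to close that gap.
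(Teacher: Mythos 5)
Your overall strategy---rewriting $A^2_{n,m}$ and $D^2_{n,m}$ via Corollary \ref{cor:enhanced-essential-arrows} and matching separating states move by move---is essentially the paper's, but the two places where you defer the work are exactly where the content lies, and your proposed ways of closing them do not work. For $\Omega_2$, the case where both new arrows lie in the state cannot be excluded by $\chi(\S(A))=m-n$ together with the boundary count: if the two strands of the move lie on different circles of $G$ (which is allowed), a state containing the two parallel arrows attaches two untwisted parallel bands to two disks, giving an annulus-type piece, which is perfectly compatible with $\S(A)$ having exactly two boundary components. What rules this case out is the ascending (resp.\ descending) condition itself: for the coherently oriented $\Omega_2$ of Figure \ref{fig:Reidem}, once the tracing first meets one of the two arrows at its head it exits at that arrow's tail and the first approach to the second arrow is at its tail, so no ascending (dually, descending) state can contain both arrows. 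You explicitly flag this case as unresolved, so as written the $\Omega_2$ part is a genuine gap, and the ``Euler characteristic plus boundary count'' route you suggest for it would fail.

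The $\Omega_3$ part has a more serious flaw: the sign-, $|S|$- and isotopy-preserving bijection of separating states that your first paragraph requires does not exist. It exists only for separating states disjoint from the fragment and for those containing exactly one of $\a_l$, $\a_r$ as a separating arrow. For states in which the fragment arrows are themselves separating (hence smoothed) arrows, the smoothed links on the two sides are \emph{not} related by an $\Omega_3$: in the paper, the state $\widetilde{S}_t$ of $\tG$ containing the top arrow corresponds not to a single state of $G$ but to the pair $S_t$, $S_{lr}$ (Figures \ref{fig:2-comp-separating-tlr} and \ref{fig:2-comp-separating-tlr1}), with $|S_{lr}|=|S_t|+1$, and the required identity $A^2_{n,m}(\tG)_{\widetilde{S}_t}=A^2_{n,m}(G)_{S_t}+A^2_{n,m}(G)_{S_{lr}}$ is obtained from the Conway skein relation $c_i(\widetilde{L}''_t)=c_i(L''_t)+c_{i-1}(L''_{lr})$ of Theorem \ref{thm:skein-relation}, not from an isotopy of smoothed sublinks. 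The fourteen-case tracing table from the proof of Theorem \ref{thm:Reidemeister-basepoint} cannot supply this, because it concerns arrows traversed (not smoothed) by the boundary tracing and carries no information about the labels or about how the smoothed link type changes; so your step ``the smoothed sublinks differ only by an $\Omega_3$ away from the base points'' is false precisely in the cases that carry the difficulty, and the proposal needs the skein-relation bookkeeping of the paper to be repaired.
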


\begin{proof}
We will prove the invariance of $A^2_{n,m}(G)$; the proof for $D^2_{n,m}(G)$ is the same.

Let $G$ and $\tG$ be two Gauss diagrams that differ by an application
of $\Omega_{2}$, so that $\tG$ has two additional arrows $\a_\eps$
and $\a_{-\eps}$, see Figure \ref{fig:Reidem}.
Ascending states of $G$ are in bijective correspondence with
ascending states of $\tG$ which do not contain $\a_{\pm\eps}$.
Note that $\a_\eps$ and $\a_{-\eps}$ can not be both in the image
of $\phi:A\to G$ with $A\in \mathcal{A}^2_{n,m}$, because $A$ is an
ascending diagram with two boundary components. Ascending states
of $\tG$ which contain one of $\a_{\pm\eps}$ come in pairs $S\cup\a_\eps$
and $S\cup\a_{-\eps}$ with opposite signs, thus cancel out in $\AA_{n,m}(\tG)$.
Hence $$A^2_{n,m}(G)=A^2_{n,m}(\tG).$$

Now, let $G$ and $\tG$ be  two Gauss diagrams that differ by an application
of $\Omega_3$, see Figure \ref{fig:Reidem} ($G$ is on the left and $\tG$ is
on the right). Denote the top, left, and right arrows in the fragment by $\a_t$,
$\a_l$, and $\a_r$ respectively. There is a bijective correspondence between
ascending separating states of $G$ and $\tG$, such that none of the arrows
$\a_r$, $\a_l$ and $\a_t$ belong to these states. Indeed, we may identify
separating states of $G$ and $\tG$ which have the same arrows and
the same labeling of arcs. For any such separating state $S$ we have
$A^2_{n,m}(G)_S=A^2_{n,m}(\tG)_S$ by Lemma \ref{lem:essential-arrows}
and Theorem \ref{thm:Reidemeister-basepoint}.

An ascending separating state of $G$ or $\tG$ may contain either exactly
one arrow of the fragment i.e. $\a_r$ or $\a_l$ or $\a_t$, or it may contain
both arrows $\a_r$ and $\a_l$. There is a bijective correspondence between
ascending separating states of $G$ and $\tG$ which contain $\a_l$. Two
possible cases of this correspondence (which differ by the labeling) are
shown in Figure \ref{fig:2comp-separating-lr} and Figure \ref{fig:2comp-separating-lr1}. Abusing the notation we denote the corresponding ascending
separating states by $S_r$, $S_l$, $S_t$, $S_{lr}$, and $\widetilde{S}_r$, $\widetilde{S}_l$, $\widetilde{S}_t$, $\widetilde{S}_{lr}$.

\begin{figure}[htb]
\centerline{\includegraphics[height=2in]{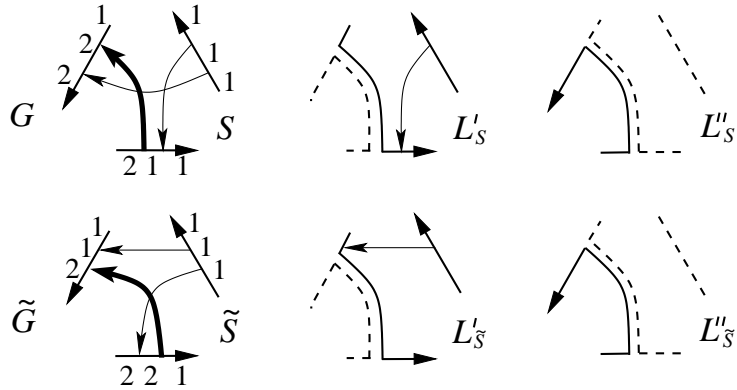}}
\caption{\label{fig:2comp-separating-lr} Identifying ascending separating
states containing $\a_l$.}
\end{figure}

\begin{figure}[htb]
\centerline{\includegraphics[height=2in]{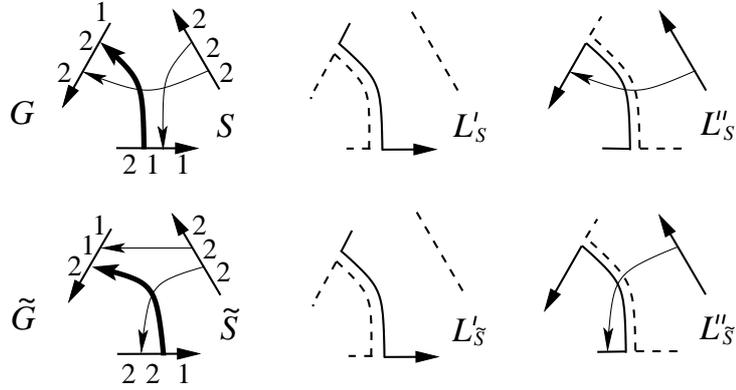}}
\caption{\label{fig:2comp-separating-lr1} Identifying other ascending
separating states containing $\a_l$.}
\end{figure}

In both cases, links $L'_S$ and $L''_S$ constructed from $G$ and $\tG$
are isotopic, thus by Lemma \ref{lem:essential-arrows}
$A^2_{n,m}(G)_S=A^2_{n,m}(\tG)_{\widetilde{S}}$.
The situation with ascending separating states which contain $\a_r$ is
completely similar and is omitted.

The correspondence of ascending separating states which contain
$\a_l\cup\a_r$ or $\a_t$ is more complicated. One of the two possible
cases is summarized in Figure \ref{fig:2-comp-separating-tlr}.

\begin{figure}[htb]
\centerline{\includegraphics[height=3in]{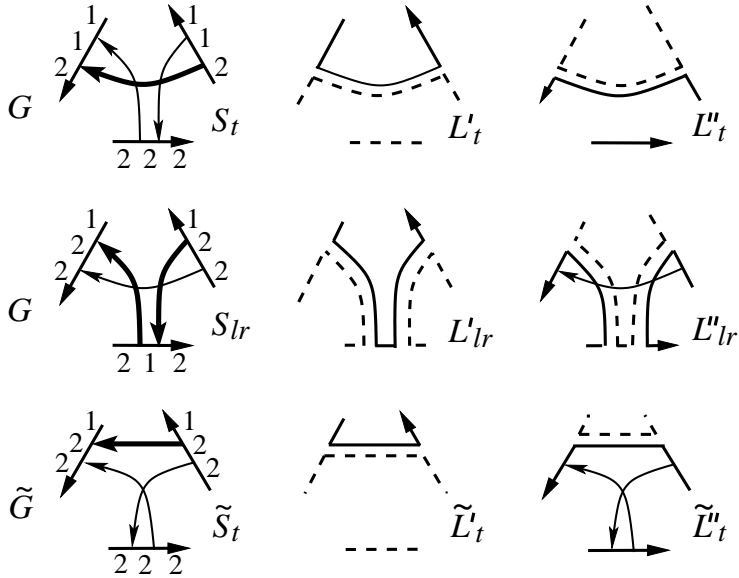}}
\caption{\label{fig:2-comp-separating-tlr} Comparison of ascending
separating states of $G$ and $\tG$.}
\end{figure}

\begin{figure}[htb]
\centerline{\includegraphics[height=3in]{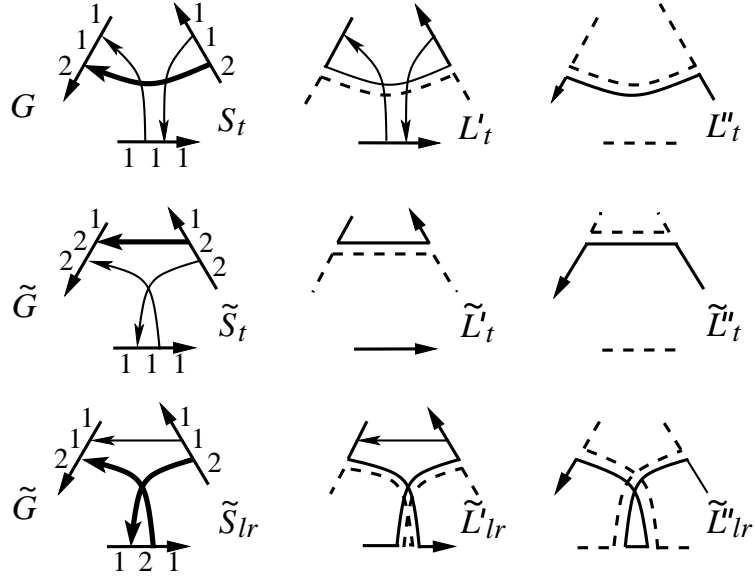}}
\caption{\label{fig:2-comp-separating-tlr1}Comparison of other
ascending separating states of $G$ and $\tG$.}
\end{figure}

Links $\widetilde{L'}_t$, $L'_t$ and $L'_{lr}$ are isotopic, thus
$$c_i(\widetilde{L'}_t)=c_i(L'_t)=c_i(L'_{lr}).$$
For $c_i(\widetilde{L}^{''}_t)$ we have:
$$c_i\left(\ris{-5}{-1}{15}{c+2comp}{-1}\right)=
c_i\left(\ris{-5}{-1}{15}{c-2comp}{-1}\right)+c_{i-1}\left(\ris{-5}{-1}{15}{c0_2comp}{-1}\right)=
c_i\left(\ris{-5}{-1}{15}{c-rad2}{-1}\right)+c_{i-1}\left(\ris{-5}{-1}{15}{c0_2comp}{-1}\right)\hspace{1mm}.$$
The first equality is the skein relation of Theorem \ref{thm:skein-relation},
and the second equality holds by the invariance of $c_i$ under $\Omega_2$.
Hence 
$$c_i(\widetilde{L}''_t)=c_i(L''_t)+c_{i-1}(L''_{lr}).$$  
Denote by $k$ the number of arrows in $S_t$, where $S_t$ is shown in Figure \ref{fig:2-comp-separating-tlr}. 
Note that the number of arrows in $\widetilde{S}_t$ and $S_{lr}$ is $k$ and $k+1$, respectively. Thus

\begin{equation*}
\sum_{i=0}^{n-k}c_i(\widetilde{L}'_t)c_{n-k-i}(\widetilde{L}''_t)=
\sum_{i=0}^{n-k}c_i(L'_t)c_{n-k-i}(L''_t)+
\sum_{i=0}^{n-k-1}c_i(L'_{lr})c_{n-k-i-1}(L''_{lr}).
\end{equation*}

Note that $\s(\widetilde{S}_t)=\s(S_t)=\s(S_{lr})$, thus by Lemma \ref{lem:essential-arrows}
$$A^2_{n,m}(\tG)_{\widetilde{S}_t}=A^2_{n,m}(G)_{S_t}+A^2_{n,m}(G)_{S_{lr}}.$$

The second possible case (which differs by labeling) is shown in
Figure \ref{fig:2-comp-separating-tlr1}.
Abusing the notation we again denote the corresponding ascending
separating states by $S_t$, $\widetilde{S}_t$, $\widetilde{S}_{lr}$.
Links $L''_t$, $\widetilde{L}''_t$ and $\widetilde{L}''_{lr}$
are isotopic. Applying the skein relation for $c_i(L'_t)$ similarly to the
above, in this case we get
$$A^2_{n,m}(G)_{S_t}=A^2_{n,m}(\tG)_{\widetilde{S}_t}+A^2_{n,m}(\tG)_{\widetilde{S}_{lr}}.$$
\end{proof}

Our next step is to study the behavior of $A^2_{n,m}(G)$ and
$D^2_{n,m}(G)$ under an application of the move
$\Omega_1$. Both $A^2_{n,m}(G)$ and  $D^2_{n,m}(G)$ change
under $\Omega_1$, see Example \ref{ex:R1-noninvariance}.

\begin{ex}\label{ex:R1-noninvariance}\rm
Let $G$, $G_1$ and $G_2$ be Gauss diagrams of an unknot shown
in Figures \ref{fig:Om1_noninv}a,  \ref{fig:Om1_noninv}b and
\ref{fig:Om1_noninv}c respectively. Then $\DD_{1,1}(G)=0$, but
$\DD_{1,1}(G_1)=1$; and $\AA_{1,1}(G)=0$, but $\AA_{1,1}(G_2)=-1$.

\begin{figure}[htb]
\centerline{\includegraphics[height=0.9in]{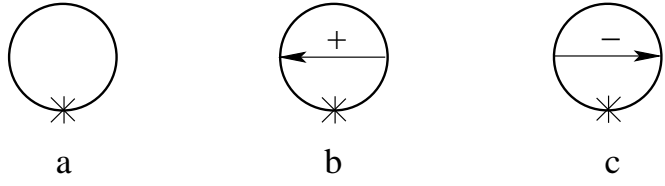}}
\caption{\label{fig:Om1_noninv}Gauss diagrams $G$, $G_1$ and $G_2$.}
\end{figure}
\end{ex}

However, this problem is easy to solve. Denote by
$$AD_{n,m}(G):=A^2_{n,m}(G)+D^2_{n,m}(G)$$ 
and let
$$I_{n,m}(G):=AD_{n,m}(G)-w(G)c_{n-1}(L),$$
where $w(G)$ is the writhe of $G$, i.e. the sum of signs of all arrows in $G$.

\begin{thm}\label{thm:invariance}
Let $G$ be a Gauss diagram of a long $m$-component
link $L$. Then $I_{n,m}(G)$ is an invariant of an underlying link
$L$, i.e. is independent of a choice of $G$.
\end{thm}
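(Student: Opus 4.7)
The plan is to verify invariance of $I_{n,m}(G)$ under the three Reidemeister moves applied away from the basepoint. By Theorem \ref{thm:invariance-2-3}, both $A^2_{n,m}(G)$ and $D^2_{n,m}(G)$, and hence their sum $AD_{n,m}(G)$, are invariant under $\Omega_2$ and $\Omega_3$. The writhe $w(G)$ is also $\Omega_2$- and $\Omega_3$-invariant, since the pair of arrows created by $\Omega_2$ carries opposite signs and $\Omega_3$ preserves all signs, while $c_{n-1}(L)$ is a link invariant. Thus $I_{n,m}(G)$ is already invariant under $\Omega_2$ and $\Omega_3$, and only $\Omega_1$ requires separate treatment.

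Suppose $\tG$ is obtained from $G$ by an $\Omega_1$ move inserting an isolated arrow $\a$ of sign $\eps=\pm 1$ away from the basepoint, so that $w(\tG)=w(G)+\eps$. It suffices to prove
$$AD_{n,m}(\tG)-AD_{n,m}(G)=\eps\, c_{n-1}(L).$$
A separating state $S$ of $\tG$ with $\a\notin S$ restricts to a separating state of $G$ of the same type and with identical labels on all remaining arcs, since Figure \ref{fig:asc-des-arcs}c forces the two arcs flanking $\a$ to share a common label, which then labels the merged arc of $G$; Lemma \ref{lem:essential-arrows} makes the corresponding $S$-pairings coincide. Hence the displayed difference equals the total contribution of separating states of $\tG$ that contain $\a$.

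When $S\ni\a$, smoothing the arrows of $S$ splits off the short arc of the kink as an unknot component $C$ of $L_S$, geometrically disjoint from the remaining components. Thus $C$ appears as a split-sum summand in whichever of $L'_S$ or $L''_S$ receives its label. If any other circle of $G_S$ carries the same label as $C$, then that sublink takes the form $O\sqcup L^{\sharp}$ with $L^{\sharp}$ nonempty, and the HOMFLYPT identity $P(L_1\sqcup L_2)=\frac{a-a^{-1}}{z}P(L_1)P(L_2)$ evaluated at $a=1$ yields $\nabla(O\sqcup L^{\sharp})=0$; by Lemma \ref{lem:essential-arrows} the contribution vanishes. For $|S|\geq 2$ this always occurs, because each additional $\beta\in S$ produces after smoothing a circle of $G_S$ bearing the label opposite to the basepoint label, hence a further circle of $L''_S$ disjoint from $C$. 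Only $S=\{\a\}$ can contribute, and the basepoint convention (the basepoint lies on the long arc of the kink, by construction of the $\Omega_1$ move) forces the labeling ``long arc $=1$, short arc $=2$'' and every other circle of $G_S$ to carry label $1$; then $L''_S=O$ and $L'_S\cong L$, so by Lemma \ref{lem:essential-arrows}, together with $c_0(O)=1$ and $c_i(O)=0$ for $i\geq 1$, the contribution is exactly $\eps\, c_{n-1}(L)$.

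The remaining and main obstacle is combinatorial: one must check that this unique surviving labeling realizes as an ascending labeling (Figure \ref{fig:asc-des-arcs}a) at $\a$ for exactly one of the two cyclic orderings of the head and tail of $\a$ on its circle, and as a descending labeling (Figure \ref{fig:asc-des-arcs}b) for the other. A direct inspection of the local patterns in Figure \ref{fig:asc-des-arcs}, cross-checked against Example \ref{ex:R1-noninvariance}, confirms this dichotomy. Consequently $\{\a\}$ contributes to precisely one of $A^2_{n,m}(\tG)$ or $D^2_{n,m}(\tG)$, so the total contribution to $AD_{n,m}(\tG)$ is $\eps\, c_{n-1}(L)$. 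Combined with $w(\tG)c_{n-1}(L)-w(G)c_{n-1}(L)=\eps\, c_{n-1}(L)$, this matches the change in the subtracted term in the definition of $I_{n,m}$, yielding $I_{n,m}(\tG)=I_{n,m}(G)$.
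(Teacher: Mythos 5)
Your proof follows essentially the same route as the paper's: $\Omega_2$ and $\Omega_3$ are disposed of by Theorem \ref{thm:invariance-2-3}, and for $\Omega_1$ the point is that the only surviving new contribution is the separating state $\{\a\}$, which is ascending or descending according to whether the head or the tail of $\a$ is met first, contributes $\s(\a)c_{n-1}(L)$ by Lemma \ref{lem:essential-arrows} (since $L''_S$ is an unknot split off from $L'_S\cong L$), and is cancelled by the change in the writhe term. The paper's proof says exactly this and leaves the states $S\ni\a$ with $|S|\ge 2$ implicit; you treat them explicitly via the split unknot $C$, which is a worthwhile addition. Two small inaccuracies in your write-up: first, for $m\ge 2$ the base point need not lie on the circle carrying the kink, and nothing in the definition of a separating state forces the remaining circles of $G_S$ to be labelled $1$ when $S=\{\a\}$; the correct statement is that every labelling in which some circle other than $C$ shares $C$'s label contributes zero, and exactly one labelling survives, giving $\s(\a)c_{n-1}(L)$. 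Second, your justification of the vanishing for $|S|\ge 2$ only covers the case where $C$ is labelled opposite to the base point; when $C$ is labelled $1$, the companion circle with the same label is the one carrying the base point (which never lies on the kink arc), not a circle produced by an extra arrow $\beta\in S$. Both points are repaired by the very mechanism you already use, namely $\nabla(O\sqcup L^\sharp)=0$, so the argument stands and agrees with the paper's.
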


\begin{proof}
By Theorem \ref{thm:invariance-2-3}, it remains to prove the
invariance of $I_{n,m}$ under $\Omega_{1}$ (applied away from
the base point). Let $\tG$ and $G$ be two Gauss diagrams which
are related by an application of $\Omega_{1}$, such that $\tG$
contains a new isolated arrow $\a$. Then $\a$ contributes either
a new ascending or a new descending separating state $\{a\}$,
depending on whether we meet its head or tail first on the passage
from the base point. Contribution of this state to $AD_{n,m}(\tG)$
is either $\s(\a)A_{n-1,m}(G)$, or $\s(\a)D_{n-1,m}(G)$; but
$$\s(\a)A_{n-1,m}(G)=\s(\a)c_{n-1}(L)=\s(\a)D_{n-1,m}(G)$$
by Corollary \ref{cor:Chmutov-Conway}, thus
$$I_{n,m}(\tG)-I_{n,m}(G)=(\s(\a)-w(\tG)+w(G))c_{n-1}(L).$$
It remains to note that $w(\tG)-w(G)=\s(\a)$.
\end{proof}

Our next step is to study dependence of $A^2_{n,m}(G)$ and
$D^2_{n,m}(G)$ on the position of the base point. The example
below shows that each of them depends on the base point.

\begin{ex} \rm
Let $G$ and $\widehat{G}$ be two Gauss diagrams of the right-handed
trefoil shown below. Then $A_{3,1}^2(G)=0$ and $D_{3,1}^2(G)=1$,
but $A_{3,1}^2(\widehat{G})=1$ and $D_{3,1}^2(\widehat{G})=0$.
$$\ris{-4}{-3}{65}{basepoint_noninvariance}{-1.1}$$
\end{ex}

However, it turns out that the sum
$A^2_{n,m}(G)+D^2_{n,m}(G)=AD_{n,m}(G)$ does not depend
on the base point. Indeed, let $G$ and $\widehat{G}$ be two Gauss
diagrams which differ only by a position of their base points. Let
$\widehat{S}$ be an ascending separating state of $\widehat{G}$.
If an arc which contains the base point $*$ of $G$ is labeled by $1$,
then $S=\widehat{S}$ is an ascending separating state of $G$ and
by Lemma \ref{lem:essential-arrows} we have
$A^2_{n,m}(G)_S=A^2_{n,m}(\widehat{G})_{\widehat{S}}$.
If an arc which contains the base point $*$ of $G$ is labeled by $2$,
we consider a descending separating state $S$ of $G$ which has the
same arrows as $\widehat{S}$, but opposite labels. By Lemma
\ref{lem:essential-arrows},
$D^2_{n,m}(G)_S=A^2_{n,m}(\widehat{G})_{\widehat{S}}$.
We repeat this process, replacing ascending separating states with
descending and $A$ with $D$. Summing up by separating states, in
view of Corollary \ref{cor:enhanced-essential-arrows} we obtain the
following

\begin{thm}\label{thm:independence-basepoint2}
Let $G$ be a based Gauss diagram of an $m$-component
link $L$. Then $AD_{n,m}(G)$ is independent of the position
of the base point.
\end{thm}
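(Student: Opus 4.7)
The plan is to establish, for any two based Gauss diagrams $G$ and $\widehat{G}$ of $L$ that differ only in the position of the base point, a bijection between the disjoint union of ascending and descending separating states of $\widehat{G}$ and that of $G$, which matches the corresponding $S$-pairings. Since $AD_{n,m}=A^2_{n,m}+D^2_{n,m}$ sums $S$-pairings over exactly these states, such a bijection gives $AD_{n,m}(G)=AD_{n,m}(\widehat{G})$.

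To build it, I fix an ascending separating state $\widehat{S}$ of $\widehat{G}$ and look at the arc of $\widehat{G}$ on which the base point of $G$ lies. Every arc carries a label in $\{1,2\}$; call the label of this arc $\ell$. If $\ell=1$, the same arrow set with the same labeling gives a state $S$ of $G$: conditions~(1) and~(2) of the separating-state definition still hold, and condition~(3) now holds at the new base point, so $S$ is again ascending. The sublinks $L'_S,L''_S$ are unchanged, and by Lemma~\ref{lem:essential-arrows} combined with the basepoint-independence of the Conway polynomial on $L'_S$ and $L''_S$ (Theorem~\ref{thm:basepoint-independence}), I get $A^2_{n,m}(G)_S=A^2_{n,m}(\widehat{G})_{\widehat{S}}$. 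If $\ell=2$, I swap the labels $1\leftrightarrow 2$ globally; by the mirror relation between Figures~\ref{fig:asc-des-arcs}a and~\ref{fig:asc-des-arcs}b this turns the ascending pattern at each $\a\in\widehat{S}$ into a descending pattern, leaves the pattern at arrows outside $\widehat{S}$ unchanged, and makes the new base-point arc carry label~$1$. The resulting $S$ is therefore a descending separating state of $G$. The label swap interchanges $L'_S$ and $L''_S$ but leaves $\s(S)$ alone, and the convolution $\sum_{i=0}^{n-|S|} c_i(L'_S)\,c_{n-|S|-i}(L''_S)$ is symmetric in the two sublinks, so Lemma~\ref{lem:essential-arrows} again yields $D^2_{n,m}(G)_S=A^2_{n,m}(\widehat{G})_{\widehat{S}}$.

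Running the same construction starting from a descending state of $\widehat{G}$, and the corresponding construction starting from a state of $G$, produces mutually inverse maps and hence the desired bijection. Summing $S$-pairings over all separating states of both kinds on each side, via Corollary~\ref{cor:enhanced-essential-arrows}, I conclude $AD_{n,m}(G)=AD_{n,m}(\widehat{G})$. The main bookkeeping obstacle lies in the second case: one must verify that the global label swap truly converts an ascending labeling of $\widehat{S}$ into a descending labeling of $S$ at every arrow of the state, that $\s(S)=\s(\widehat{S})$ (since $\s$ depends only on the arrow set, not the labels), and that interchanging $L'_S$ with $L''_S$ leaves the convolution invariant. The first point is immediate from the mirror structure of Figures~\ref{fig:asc-des-arcs}a and~\ref{fig:asc-des-arcs}b, and the others are by direct inspection, so the substantive content of the proof is really Lemma~\ref{lem:essential-arrows} together with the classical-link basepoint-independence already established in Theorem~\ref{thm:basepoint-independence}.
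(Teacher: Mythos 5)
Your argument is correct and is essentially the paper's own proof: you match separating states of $\widehat{G}$ and $G$ by keeping the labeling when the arc containing the new base point is labeled $1$, and globally swapping labels (turning ascending into descending states) when it is labeled $2$, then conclude via Lemma \ref{lem:essential-arrows} and Corollary \ref{cor:enhanced-essential-arrows}. The extra appeal to Theorem \ref{thm:basepoint-independence} is harmless but not needed, since Lemma \ref{lem:essential-arrows} already expresses each $S$-pairing through the Conway coefficients of the links $L'_S$ and $L''_S$, which do not see the base point.
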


Note that $AD_{n,m}(G)$ is invariant under $\Omega_1^F$ move. Now Theorems \ref{thm:invariance-2-3} and \ref{thm:independence-basepoint2} imply two important corollaries.

\begin{cor}\label{cor:invariance-framed}
Let $G$ be any Gauss diagram of an $m$-component framed
link $L$. Then $AD_n(L)=AD_{n,m}(G)$ is an invariant of an underlying
framed link, i.e does not depend on $G$.
\end{cor}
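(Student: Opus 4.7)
The plan is to reduce Corollary \ref{cor:invariance-framed} to the preceding three theorems of this section. Framed link equivalence is generated by the moves $\Omega_1^F$, $\Omega_2$, $\Omega_3$, so the corollary amounts to verifying that $AD_{n,m}(G)$ is invariant under each of these moves and also independent of the chosen base point on the Gauss diagram of the closed link.

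The base-point independence is supplied directly by Theorem \ref{thm:independence-basepoint2}. This has the convenient side effect that any Reidemeister move can be carried out after first shifting the base point out of the affected region, so it suffices to check the three framed moves applied away from the base point. For $\Omega_2$ and $\Omega_3$ I would appeal straight to Theorem \ref{thm:invariance-2-3}, which establishes invariance of both $A^2_{n,m}$ and $D^2_{n,m}$, and hence of their sum $AD_{n,m}$.

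For the framed first move $\Omega_1^F$, the cleanest route is to write $AD_{n,m}(G)=I_{n,m}(G)+w(G)c_{n-1}(L)$ and invoke Theorem \ref{thm:invariance}, according to which $I_{n,m}$ depends only on the underlying unframed link. Since $c_{n-1}(L)$ is an unframed link invariant and the writhe $w(G)$ is manifestly unchanged by $\Omega_1^F$ (the two inserted kinks carry opposite signs), the correction term $w(G)c_{n-1}(L)$ is itself invariant under $\Omega_1^F$, and therefore so is $AD_{n,m}(G)$. Putting the three ingredients together then yields the corollary.

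The one detail that requires attention is that Theorem \ref{thm:invariance} is phrased for long links, whereas the corollary concerns closed framed links. Its proof, however, only uses $\Omega_1$ applied away from the base point together with Theorem \ref{thm:invariance-2-3}; combined with the base-point independence of $AD_{n,m}$ provided by Theorem \ref{thm:independence-basepoint2} (together with the evident base-point independence of $w(G)$ and $c_{n-1}(L)$), the argument transfers verbatim to closed links, and this is the main point I would expect to track carefully in the writeup.
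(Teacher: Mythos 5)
Your argument is correct and follows essentially the same route as the paper, which likewise combines Theorem \ref{thm:invariance-2-3} for $\Omega_2,\Omega_3$, Theorem \ref{thm:independence-basepoint2} for base-point independence, and the observation that $AD_{n,m}$ is unchanged under $\Omega_1^F$. Your justification of the $\Omega_1^F$ step via the identity $AD_{n,m}(G)=I_{n,m}(G)+w(G)c_{n-1}(L)$ and Theorem \ref{thm:invariance} is just a repackaging of the paper's implicit cancellation of the two opposite-signed isolated arrows (each contributing $\s(\a)c_{n-1}(L)$), so the proofs agree in substance.
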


\begin{cor}\label{cor:invariance}
Let $G$ be any Gauss diagram of an $m$-component link $L$.
Then $I_n(L):=I_{n,m}(G)$
is an invariant of an underlying link $L$, i.e. is independent of a choice of $G$.
\end{cor}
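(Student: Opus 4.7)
The plan is to derive the corollary as an immediate synthesis of the two previously established invariance results, Theorem \ref{thm:invariance} and Theorem \ref{thm:independence-basepoint2}, plus the trivial observation that the correction term $w(G)\,c_{n-1}(L)$ is well behaved. Concretely, I want to show that $I_{n,m}(G)$ is invariant both under the Reidemeister moves away from the base point and under arbitrary relocations of the base point; any two Gauss diagrams of $L$ can then be connected by a sequence of such operations.

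First, I would invoke Theorem \ref{thm:invariance}, which already gives invariance of $I_{n,m}(G)$ under $\Omega_1$, $\Omega_2$, and $\Omega_3$ performed away from the base point. (The proof there was written for long links, but the quantity $I_{n,m}(G)$ is defined diagrammatically from $G$ alone, so the same calculation applies here.) For base-point independence I would write $I_{n,m}(G)=AD_{n,m}(G)-w(G)\,c_{n-1}(L)$ and treat each summand separately: $AD_{n,m}(G)$ is base-point independent by Theorem \ref{thm:independence-basepoint2}; the writhe $w(G)=\sum_{\alpha}\s(\alpha)$ sums over all arrows of $G$ with no reference to the base point; and $c_{n-1}(L)$ depends only on the underlying link $L$. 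Therefore sliding the base point across an arrow endpoint, or transferring it to a different circle of $G$, leaves $I_{n,m}(G)$ unchanged.

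Having established both types of invariance, I would conclude as follows. Given two Gauss diagrams $G_1,G_2$ of the same link $L$ (each equipped with an arbitrary base point), realize an isotopy between them by a finite sequence of Reidemeister moves. Whenever such a move would be obstructed by the current base point, first slide the base point to a location where the move is admissible (this does not change $I_{n,m}$ by the paragraph above), perform the move (this does not change $I_{n,m}$ by Theorem \ref{thm:invariance}), and then slide the base point back. Iterating this procedure yields $I_{n,m}(G_1)=I_{n,m}(G_2)$, so $I_n(L):=I_{n,m}(G)$ is well defined.

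There is no genuine obstacle here: all of the technical content has already been absorbed into Theorem \ref{thm:invariance} and Theorem \ref{thm:independence-basepoint2}, and the corollary amounts to combining them with the elementary remarks about $w(G)$ and $c_{n-1}(L)$. The only point deserving explicit care in the write-up is to emphasize that the correction $w(G)\,c_{n-1}(L)$ does not spoil base-point invariance, since this was the whole reason for introducing it in the definition of $I_{n,m}$.
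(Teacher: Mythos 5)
Your proposal is correct and follows essentially the same route as the paper: the corollary is obtained there, with no further argument, by combining Theorem \ref{thm:invariance} (invariance of $I_{n,m}$ under Reidemeister moves away from the base point) with Theorem \ref{thm:independence-basepoint2} (base-point independence of $AD_{n,m}$), the remaining terms $w(G)$ and $c_{n-1}(L)$ being manifestly independent of the base point. Your write-up just makes explicit the standard concatenation of moves and base-point relocations that the paper leaves implicit.
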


\section{Properties of $I_n$}

In this section we establish the skein relation for $I_{n,m}$.
Then we identify $I_{n,m}$ with coefficients of a certain
polynomial, which is derived from the HOMFLYPT polynomial.

\subsection{Skein relation}

In this part we establish the skein relation for $AD_{n,m}(G)$. First we recall a notion of the Conway triple of links.

Let $L_+$, $L_-$ and $L_0$ be a triple of links with diagrams
which are identical except for a small fragment, where $L_+$ and
$L_-$ have  a positive and a negative  crossing respectively, and
$L_0$ has a smoothed crossing, see Figure \ref{fig:triple}. Such a triple of links is called a \textit{Conway triple}.

\begin{figure}[htb]
\centerline{\includegraphics[height=0.8in]{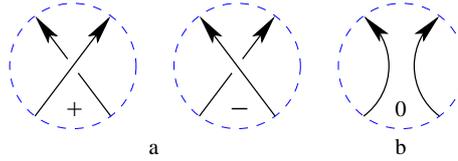}}
\caption{\label{fig:triple} Conway triple}
\end{figure}

\begin{thm}\label{thm:skein1}
Let $L_+$, $L_-$, $L_0$ be a Conway triple of links with the corresponding Conway triple $G_+$, $G_-$, $G_0$ of Gauss diagrams, see Figures \ref{fig:triple} and \ref{fig:Conway}. Denote the number
of circles of $G_\pm$ and $G_0$ by $m$ and $m_0$, respectively. Then
\begin{multline}\label{eq:A1}
AD_{n,m}(G_+)-AD_{n,m}(G_-)=\\
\begin{cases}
&AD_{n-1,m_0}(G_0)\ ,\quad \text{if $m_0=m-1$}\\
&AD_{n-1,m_0}(G_0)+\displaystyle\sum_{L'\subset L_0}
\sum_{i=0}^{n-1}c_i(L')c_{n-i-1}(L_0\sm L')\quad \text{if $m_0=m+1$},\\
\end{cases}
\end{multline}
where the summation is over all sublinks $L'$ of $L_0$ which contain exactly
one of the two new sublinks resulting from the smoothing.
\end{thm}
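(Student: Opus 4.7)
My plan is to reduce both sides using Corollary \ref{cor:enhanced-essential-arrows}, which writes
\begin{equation*}
AD_{n,m}(G) = \sum_S \s(S) \sum_i c_i(L'_S)\, c_{n-|S|-i}(L''_S),
\end{equation*}
with $S$ ranging over all ascending and descending separating states of $G$, and then to compare the contributions on $G_\pm$ term by term with the separating states of $G_0$, partitioning the states of $G_\pm$ according to whether the Conway arrow $\a=\a_\pm$ lies in $S$.

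If $\a\in S$ (Category A), the state patterns in $G_+$ and $G_-$ coincide, but $\s(S_+)=-\s(S_-)$. Since smoothing $\a$ is part of smoothing $S$, the sublinks $L'_S,L''_S$ agree with those of the state $S'=S\sm\{\a\}$ of $G_0$ (the two descriptions produce the same underlying surface), so the net contribution of each such $S$ to the difference is $2\s(S')\sum_i c_i(L'_{S'})c_{n-|S|-i}(L''_{S'})$. If $\a\notin S$ (Category B), the state pattern and its sign agree in $G_\pm$ while only the sublink containing the surviving crossing $\a$ differs by the sign of $\a$; invoking the Conway skein relation $c_i(L'_+)-c_i(L'_-)=c_{i-1}(L'_0)$ for classical links (Theorem \ref{thm:skein-relation} together with Corollary \ref{cor:Chmutov-Conway}) termwise converts each such difference into a contribution matching the state $S'$ of $G_0$ that keeps the labels of $S$ --- the two new arcs of $G_0$ inheriting the common label that the four arcs of $G_\pm$ around $\a$ carry in $S$.

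Next I identify how separating states $S'$ of $G_0$ are hit. Each such $S'$ assigns labels to the two new arcs produced by smoothing $\a$; these labels are either equal (same-label) or different (diff-label). Category B states biject with same-label states, and the skein computation above realises the matching contribution term by term. Category A states biject with diff-label states, with the ascending and descending choices on $G_\pm$ producing the two possible orderings of the differing labels, as prescribed by Figures \ref{fig:asc-des-arcs}a,b; the factor of $2$ in Category A therefore doubles the diff-label contribution, giving
\begin{equation*}
AD_{n,m}(G_+)-AD_{n,m}(G_-) = AD_{n-1,m_0}(G_0) + AD_{n-1,m_0}(G_0)^{\mathrm{diff}},
\end{equation*}
where the last summand is the restriction to diff-label states.

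If $m_0=m-1$, the two new arcs of $G_0$ lie on the single circle formed by merging the two original circles joined by $\a$, and the global consistency of a separating labeling forces these arcs to share a label, whence $AD_{n-1,m_0}(G_0)^{\mathrm{diff}}=0$ and we recover the first case of the theorem. If $m_0=m+1$, smoothing $\a$ splits one component into two new components $K_1,K_2$ which may be labeled independently; grouping diff-label states by the induced partition of the components of $L_0$ and applying Corollary \ref{cor:Chmutov-Conway} to each part of the partition collapses the state sum into the direct sum $\sum_{L'}\sum_{i=0}^{n-1}c_i(L')c_{n-1-i}(L_0\sm L')$ over sublinks $L'\subset L_0$ containing exactly one of $K_1,K_2$. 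The main technical hurdle will be verifying the bijections of the previous paragraph: one must check, using Figures \ref{fig:asc-des-arcs}a,b, that Category A states of $G_\pm$ correspond bijectively to diff-label states of $G_0$ of matching ascending/descending type (so that the sign-induced factor of $2$ gives exactly twice the diff-label part of $AD_{n-1,m_0}(G_0)$), and that in the merged-circle case no separating labeling can assign different labels to the two new arcs.
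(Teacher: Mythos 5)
Your overall strategy---expanding $AD_{n,m}$ over separating states via Corollary \ref{cor:enhanced-essential-arrows}, splitting the states of $G_\pm$ according to whether the Conway arrow lies in the separating set, and using the Conway skein relation termwise---is exactly the paper's, but the bookkeeping of the states containing the arrow is wrong, and the two patches you build on top of it fail. A crossing change reverses not only the sign but also the direction of the arrow, so $\a_+$ and $\a_-$ in Figure \ref{fig:Conway} point in opposite directions. Hence a fixed labeling of the four arcs obeying condition (1) of a separating state (Figure \ref{fig:asc-des-arcs}a or b) is admissible for exactly one of $\a_+$, $\a_-$, never for both: your Category A does not produce matching states of $G_+$ and $G_-$ with opposite signs, and there is no factor $2$. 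The correct count (as in the paper) is that each state $S'$ of $G_0$ whose two new arcs carry different labels corresponds to a state $S'\cup\a_+$ of $G_+$ \emph{or} a state $S'\cup\a_-$ of $G_-$, and since $\s(\a_\pm)=\pm1$, either way it contributes $+A^2_{n-1,m_0}(G_0)_{S'}$ (once) to the difference. So the diff-label states of $G_0$ are simply part of $AD_{n-1,m_0}(G_0)$; no doubled ``diff-label'' correction term arises.

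Both patches are genuinely false. For $m_0=m-1$, the two new arcs of $G_0$ lie on the merged circle but can perfectly well carry different labels, since labels change at every other arrow of the separating state along that circle (already a one-arrow separating state of the merged circle does this); so your claim that global consistency forces equal labels, i.e. $AD_{n-1,m_0}(G_0)^{\mathrm{diff}}=0$, is wrong. For $m_0=m+1$, the actual source of the extra term is the family of states of $G_\pm$ whose separating set is exactly $\{\a_\pm\}$: your correspondence sends these to the empty set, which is \emph{not} a separating state of $G_0$ (the associated surface would be disconnected), so they have no $G_0$ counterpart; by Lemma \ref{lem:essential-arrows} their total contribution to the difference is $\sum_{L'\subset L_0}\sum_{i}c_i(L')c_{n-1-i}(L_0\sm L')$ over sublinks separating the two new components. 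This is not the diff-label part of $AD_{n-1,m_0}(G_0)$: diff-label states with more arrows contribute Conway coefficients of the smoothed links $L'_S$, $L''_S$, not of sublinks of $L_0$, so your ``collapse'' of that state sum has no justification. Finally, in Category B you must treat separately the states in which the two strands at the crossing carry different labels: there the crossing connects differently labeled sublinks, so $L'_S$, $L''_S$ coincide for $G_+$ and $G_-$, these contributions cancel in the difference and have no counterpart in $G_0$; your asserted bijection of Category B with same-label states of $G_0$ only holds after this cancellation is made explicit.
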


\begin{proof}
Denote the arrows of $G_+$ and $G_-$ appearing in Figure \ref{fig:Conway}
by $\a_+$ and $\a_-$, respectively. Let us look at labels of ascending separating states of $G_\pm$ and $G_0$ on
four arcs of the shown fragment. If labels of all four arcs are the same, we may
identify states of $G_\pm$ and $G_0$ with the same arrows and labels of arcs,
see Figure \ref{fig:skein-sep-states}a. Lemma \ref{lem:essential-arrows} and
Conway skein relation imply, that for every such state $S$
$$A^2_{n,m}(G_+)_S-A^2_{n,m}(G_-)_S=A^2_{n-1,m_0}(G_0)_S.$$
The descending case is treated similarly.

If labels on two arcs near the head of $\a_\pm$ coincide, but differ from
labels near the tail of $\a_\pm$, by Lemma \ref{lem:essential-arrows} we
have $A^2_{n,m}(G_+)_S-A^2_{n,m}(G_-)_S=0$ for any such state $S$
of $G_\pm$, and there is no corresponding state of $G_0$, see Figure
\ref{fig:skein-sep-states}b. The descending case is treated similarly.

\begin{figure}[htb]
\centerline{\includegraphics[width=4.7in]{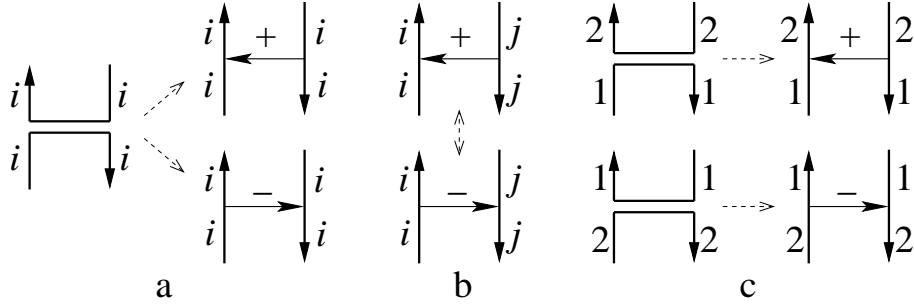}}
\caption{\label{fig:skein-sep-states} Correspondence of separating states
of $G_0$ and $G_\pm$.}
\end{figure}

There are two further cases when labels of two arcs near the head of
$\a_\pm$ are different. Such a state $S$ of $G_0$ corresponds either
to an ascending separating state $S\cup\a_+$ of $G_+$, or to an
ascending separating state $S\cup\a_-$ of $G_-$, see Figure
\ref{fig:skein-sep-states}c. By Lemma \ref{lem:essential-arrows} we have
$A^2_{n,m}(G_+)_{S\cup\a_+}=A^2_{n-1,m_0}(G_0)_S$ in the first case
and $A^2_{n,m}(G_-)_{S\cup\a_-}=-A^2_{n-1,m_0}(G_0)_S$ in the second case.

If $m_0=m-1$, there are no other ascending separating states of any of
the diagrams and, repeating this computation for descending separating
states, summing over states and using Corollary \ref{cor:enhanced-essential-arrows},
we obtain the first equality in~\eqref{eq:A1}.

If $m_0=m+1$, both ends of $\a_\pm$ are on the same circle of
$G_\pm$ and there is an additional contribution to $AD_{n,m}(G_\pm)$
of separating states $S=\{\a_\pm\}$ of $G_\pm$, which contain only the
arrow $\a_\pm$ (and some labeling of arcs)\footnote{These states have
no counterpart in $G_0$, since such a separating state of $G_0$ should
be empty and corresponding surface disconnected.}.
Such separating states correspond to labeling all circles of $G_0$ by 1, 2
so that the based circle is labeled by 1, and two new components of $G_0$
resulting from the smoothing have different labels. Denote by $L'$ the
sublink labeled by 1. The case of descending separating states $\{\a_\pm\}$
is similar. By Corollary \ref{cor:enhanced-essential-arrows}, the contribution
of these states to $AD_{n,m}(G_+)-AD_{n,m}(G_-)$ equals
$$\displaystyle\sum_{L'\subset L_0}\sum_{i=0}^{n-1}c_i(L')c_{n-i-1}(L_0\sm L')$$
and the proof follows.
\end{proof}

\subsection{Identification of the invariant $I_n$}

In this subsubsection we identify $I_n$ with certain derivatives
of the HOMFLYPT polynomial.

Let $P(L)$ be the HOMFLYPT polynomial of a link $L$.
We denote by $P'_a(L)$ the first derivative of $P(L)$ w.r.t. $a$.
Then $P'_a(L)|_{a=1}$ is a polynomial in the variable $z$. We
denote by $p_n(L)$ the coefficient of $z^n$ in $zP'_a(L)|_{a=1}$. Note that $p_n(L)$ a finite type invariant of degree $n$, see \cite{KM}, and hence by the Goussarov theorem it admits a Gauss diagram formula involving arrow diagrams with up to $n$ arrows. The precise formula is shown in the next theorem.

\begin{thm}\label{thm:identification}
Let $L$ be an $m$-component link. Then for every $n\geq 0$
\begin{equation}
I_n(L)=p_n(L)-C_n(L),
\end{equation}
where $C_n(L)$ is defined by
$$C_n(L):=\sum\limits_{L'\subset L}\sum_{i=0}^{n}c_i(L')c_{n-i}(L\sm L'),$$
and the summation is over all proper sublinks $L'$ of $L$.
\end{thm}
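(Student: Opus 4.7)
My plan is to compare $I_n(L)$ and $p_n(L)-C_n(L)$ by a double induction on the pair $(\text{crossing number},n)$: at each step I would show both invariants satisfy the same skein relation with respect to a Conway triple $(L_+,L_-,L_0)$, and that they agree on a large enough family of base cases (the unlinks $O_m$) to seed the induction. This is the standard way to identify a Vassiliev invariant with another combinatorially defined one, and both $I_n$ (by Corollary \ref{cor:invariance}) and $p_n-C_n$ (as a manifest combination of link polynomials) are known to be genuine invariants.

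First, I would derive the skein relation for $p_n$. Differentiating the HOMFLYPT skein \eqref{eq:Homfly-skein} in $a$, setting $a=1$, and multiplying by $z$ gives
$$zP'_a(L_+)|_{a=1}-zP'_a(L_-)|_{a=1}=z^2P'_a(L_0)|_{a=1}-z\nabla(L_+)-z\nabla(L_-).$$
Extracting the coefficient of $z^n$ yields $p_n(L_+)-p_n(L_-)=p_{n-1}(L_0)-c_{n-1}(L_+)-c_{n-1}(L_-)$. For $I_n=AD_{n,m}-w\cdot c_{n-1}$, I would combine Theorem \ref{thm:skein1} with the observation $w(L_+)=w(L_-)+2=w(L_0)+1$ and the Conway skein $c_{n-1}(L_+)-c_{n-1}(L_-)=c_{n-2}(L_0)$. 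A short algebraic manipulation cancels the writhe-dependent terms and produces
$$I_n(L_+)-I_n(L_-)=\begin{cases}I_{n-1}(L_0)-c_{n-1}(L_+)-c_{n-1}(L_-),&m_0=m-1,\\I_{n-1}(L_0)+X-c_{n-1}(L_+)-c_{n-1}(L_-),&m_0=m+1,\end{cases}$$
where $X$ is exactly the correction term in Theorem \ref{thm:skein1}.

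The key computation is then to show that $C_n$ satisfies a matching skein relation, namely $C_n(L_+)-C_n(L_-)=C_{n-1}(L_0)$ in the $m_0=m-1$ case and $C_n(L_+)-C_n(L_-)=C_{n-1}(L_0)-X$ in the $m_0=m+1$ case. I expect this to be the main obstacle and the step that requires the most bookkeeping. The idea is to partition the proper sublinks $L'\subset L_\pm$ according to how they meet the changing crossing: either both strands lie in $L'$ (Case A), both lie in $L\smallsetminus L'$ (Case B), or the two strands are split between $L'$ and its complement (Case C). In Case C the contributions to $C_n(L_+)$ and $C_n(L_-)$ are identical, while in Cases A and B one applies the Conway skein to the affected factor. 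On the other side, I would organize the proper sublinks $M'\subset L_0$ by how they relate to the new components created (or removed) by smoothing: matching Case A with the sublinks of $L_0$ that ``contain both new pieces'', Case B with those that ``contain neither'', and—when $m_0=m+1$—identifying the ``mixed'' sublinks (those containing exactly one of the two new components) with the extra sum $X$ of Theorem \ref{thm:skein1}. Putting these bijections together yields the desired identity.

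Granted the matching skein relations, both $I_n$ and $p_n-C_n$ are determined by their values on unlinks via the standard descending‐induction argument on crossings, combined with induction on $n$. For the base case I would verify agreement on every unlink $O_m$. On the unknot both sides are zero. A direct computation with $P(O_m)=((a-a^{-1})/z)^{m-1}$ shows $P'_a(O_m)|_{a=1}=0$ for $m\geq 3$, hence $p_n(O_m)=0$ there, while $p_0(O_2)=2$ and $p_n(O_2)=0$ for $n>0$. On the $C_n$ side, only the sublink decompositions where every factor is an unknot contribute, giving $C_0(O_2)=2$ and $C_n(O_m)=0$ otherwise. Finally $I_n(O_m)=0$ for all $m,n$ since the empty Gauss diagram admits no connected arrow subdiagram with two boundary components. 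Thus $I_n=p_n-C_n$ on all unlinks, which together with the skein comparison completes the induction.
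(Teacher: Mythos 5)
Your proposal is correct and takes essentially the same route as the paper: differentiate the HOMFLYPT skein to get the relation for $p_n$, combine Theorem \ref{thm:skein1} with the sublink case analysis for $C_n$ (your form $C_n(L_+)-C_n(L_-)=C_{n-1}(L_0)-X$ in the $m_0=m+1$ case is equivalent to the paper's relation \eqref{eq:C-n}) and the writhe bookkeeping to get the matching relation for $I_n$, and then fix both sides by their common values on unlinks. No gaps beyond the level of detail the paper itself leaves implicit.
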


\begin{proof}
It is enough to show that $I_n(L)+C_n(L)$ and $p_n(L)$
satisfy the same skein relation and  take the same value on
unlinks with any number of components.

The skein relation for $zP'_a(L)|_{a=1}$ follows directly from
the skein relation for $P(L)$, see \eqref{eq:Homfly-skein}. Differentiating this skein relation
w.r.t. $a$, substituting $a=1$, and multiplying by $z$ we obtain
$$zP'_a(L_+)|_{a=1}-zP'_a(L_-)|_{a=1}+zP(L_+)|_{a=1}+zP(L_-)|_{a=1}=z^2P'_a(L_0)|_{a=1}.$$
Note that $P(L)|_{a=1}=\nabla(L)$ is the Conway polynomial
of $L$. Thus
$$zP'_a(L_+)|_{a=1}-zP'_a(L_-)|_{a=1}+z\nabla(L_+)+z\nabla(L_-)=z^2P'_a(L_0)|_{a=1}.$$
Taking the $n$-th coefficient, we get

\begin{equation}
p_n(L_+)-p_n(L_-)+c_{n-1}(L_+)+c_{n-1}(L_-)=
p_{n-1}(L_0).
\end{equation}

The skein relation for $C_n$ is obtained directly from the Conway skein relation.
It depends on the number $m_0$ of the components in $L_0$:

\begin{equation}\label{eq:C-n}
C_n(L_+)-C_n(L_-)=\begin{cases}
&C_{n-1}(L_0),\ \textrm{if $m_0=m-1$}\\
&2\displaystyle\sum_{L'\subset L_0}\sum_{i=0}^{n-1}c_i(L')c_{n-i-1}(L_0\sm L')\ \textrm{if $m_0=m+1$},\\
 \end{cases}
\end{equation}
where the summation is over all sublinks $L'$ of $L_0$ which
contain both new components appearing after the smoothing.
Now Theorem \ref{thm:skein1} and equality ~\eqref{eq:C-n} yield

\begin{equation*}
AD_{n,m}(G_+)-AD_{n,m}(G_-)+C_n(L_+)-C_n(L_-)=
AD_{n-1,m_0}(G_0)+C_{n-1}(L_0).
\end{equation*}

Deducting $w(G_0)c_{n-2}(L_0)$ from both sides of this equation
and noticing that $w(G_0)=w(G_+)-1=w(G_-)+1$ and
$c_{n-1}(L_+)-c_{n-1}(L_-)=c_{n-2}(L_0)$, so
$$w(G_0)c_{n-2}(L_0)=w(G_+)c_{n-1}(L_+)-w(G_+)c_{n-1}(L_+)-c_{n-1}(L_+)-c_{n-1}(L_-),$$
we obtain the desired skein relation for $I_n(L)+C_n(L)$:

\begin{align*}
(I_n(L_+)+C_n(L_+))-(I_n(L_-)+C_n(L_-))
+&c_{n-1}(L_+)+c_{n-1}(L_-)=\\ &I_{n-1}(L_0)+C_{n-1}(L_0).
\end{align*}

It remains to compare values of $I_n(L)+C_n(L)$ and $p_n(L)$ on an
$m$-component unlink $O_m$. From the definition of $I_n(L)$ we get
$I_n(O_m)=AD_{n,m}(O_m)=0$ for any $n$ and $m$. Also, the
equality $p_n(O_m)=C_n(O_m)$ holds for any $n$ and $m$, since
$p_0(O_2)=C_0(O_2)=2$ and $p_n(O_m)=C_n(O_m)=0$ otherwise.
This concludes the proof of the theorem.
\end{proof}

\begin{ex}\rm
Let $G$ be a Gauss diagram of a link $H_2$ shown in Figure \ref{fig:H2-Gauss}.

\begin{figure}[htb]
\centerline{\includegraphics[height=0.7in]{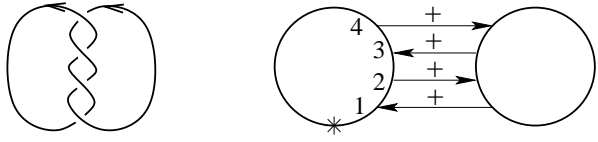}}
\caption{\label{fig:H2-Gauss} Link $H_2$ and its Gauss diagram.}
\end{figure}

Let us calculate $C_n(H_2)$ and $I_n(H_2)$. Both components of $H_2$
are trivial, so $C_0(H_2)=2$ and $C_n(H_2)=0$ for $n\ne 0$. The only
ascending states of $G$ are $\{1,2\}$, $\{1,4\}$, and $\{3,4\}$; the only
descending state of $G$ is $\{2,3\}$. Thus $AD_{2,2}(G)=4$. Note that
$c_1(H_2)=2$ and $c_n(H_2)=0$ for $n\ne 1$, thus $I_2(H_2)=4-4\cdot 2=-4$
and $I_n(H_2)=0$ for $n\ne 2$. Indeed, one may check that
$P(H_2)=a^{-3}z^{-1}-a^{-5}z^{-1}+a^{-3}z+a^{-1}z$, so
$zP'_a(H_2)|_{a=1}=2-4z^2$.
\end{ex}

\section{Last Remarks}
\subsection{The case of knots}

In this subsection we define for every $n\geq 0$ another two
invariants $I_{A,n}$ and $I_{D,n}$ of classical knots.

\begin{defn}\rm
Let $G$ be a based Gauss diagram of a knot $K$. We
go on the circle of $G$ starting from the base point until we
return to the base point. Denote by $w_A(G)$ (respectively
$w_D(G)$) sum of signs of all arrows of $G$ which we pass
first at the arrowhead (respectively arrowtail).
\end{defn}

\begin{thm} Let $G$ be any
based Gauss diagram of a knot $K$. Then for every $n\geq 0$ both
\begin{align*}
&I_{A,n}(G):=A^2_{n,1}(G)-w_A(G)\cdot c_{n-1}(K),\\
&I_{D,n}(G):=D^2_{n,1}(G)-w_D(G)\cdot c_{n-1}(K)
\end{align*}
are invariants of a knot $K$.
\end{thm}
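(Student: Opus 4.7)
The plan is to adapt the strategy of Corollary~\ref{cor:invariance} separately to the ascending and descending pieces. Three tasks arise: invariance under $\Omega_2$ and $\Omega_3$ away from the base point, invariance under $\Omega_1$, and independence of the base point's position. I describe the argument for $I_{A,n}$; the case of $I_{D,n}$ is parallel.

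For invariance under $\Omega_2$ and $\Omega_3$: Theorem~\ref{thm:invariance-2-3} already gives invariance of $\AA_{n,1}(G)$, and $c_{n-1}(K)$ is a knot invariant by Corollary~\ref{cor:Chmutov-Conway}, so only invariance of $w_A(G)$ remains to be checked. Under $\Omega_2$ the two new arrows $\a_\eps$ and $\a_{-\eps}$ have opposite signs; in a legitimate Reidemeister~II move a single strand is over at both new crossings, so the two new arrows have the same orientation on the Gauss diagram and are therefore encountered first at endpoints of the same type (both heads or both tails) in the tracing, making their signed contributions to $w_A$ cancel. Under $\Omega_3$ no signs change; a direct case analysis on the tracing inside the $\Omega_3$ fragment, of the same flavor as the combinatorial table in the proof of Theorem~\ref{thm:Reidemeister-basepoint}, shows that the head-first versus tail-first type of each of the three arrows $\a_t,\a_l,\a_r$ is preserved, because the global order in which the tracing visits the three strands of the fragment is unaffected by $\Omega_3$.

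For invariance under $\Omega_1$, let $\tG$ differ from $G$ by an added isolated arrow $\a$, and suppose $h(\a)$ is encountered first in the tracing of $\tG$ (the case $t(\a)$ first being symmetric). By exactly the argument used in the proof of Theorem~\ref{thm:invariance}, $\{\a\}$ is a new ascending separating state of $\tG$ whose contribution to $\AA_{n,1}(\tG)$ equals $\s(\a)\,A_{n-1,1}(G)=\s(\a)\,c_{n-1}(K)$ by Corollary~\ref{cor:Chmutov-Conway}, while no new descending separating state involves $\a$. Simultaneously $w_A(\tG)=w_A(G)+\s(\a)$, so the two contributions cancel in $I_{A,n}(\tG)-I_{A,n}(G)$.

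The main obstacle is base-point independence. I would move the base point across a single endpoint $h(\beta)$ of some arrow $\beta$; this flips $\beta$'s first-encounter status and changes $w_A$ by $\pm\s(\beta)$, so one must match this by a corresponding change of $\pm\s(\beta)\,c_{n-1}(K)$ in $\AA_{n,1}$. By Lemma~\ref{lem:essential-arrows} together with Corollary~\ref{cor:Chmutov-Conway}, every separating state $S$ contributes the common quantity $V_S:=\s(S)\sum_i c_i(L'_S)\,c_{n-|S|-i}(L''_S)$ to either $\AA_{n,1}(G)_S$ or $\DD_{n,1}(G)_S$, depending only on the arrow set $S$. Mimicking the analysis in the proof of Theorem~\ref{thm:independence-basepoint2}, the base-point move exchanges the valid ascending and descending labelings on every separating state whose arrow set contains $\beta$, while leaving states not containing $\beta$ unchanged. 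The net change $\AA_{n,1}(\widehat G)-\AA_{n,1}(G)$ therefore reduces to a signed sum of $V_S$ over arrow sets $S$ with $\beta\in S$, and the hardest step is to show, via a telescoping skein computation using Theorem~\ref{thm:skein-relation} and the base-point invariance of $c_{n-1}$ for classical knots, that this sum collapses to $\pm\s(\beta)\,c_{n-1}(K)$, exactly matching the change in $w_A\cdot c_{n-1}(K)$.
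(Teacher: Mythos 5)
Your handling of $\Omega_2$, $\Omega_3$ and $\Omega_1$ is essentially the paper's argument (the paper merely asserts the invariance of $w_A$ under $\Omega_2$ and $\Omega_3$, which you justify in more detail, and your $\Omega_1$ cancellation between the new ascending separating state $\{\a\}$ and the change in $w_A$ is exactly the published computation). The genuine gap is in your third part. The paper does not prove base-point independence at all: it invokes the classical fact that the theories of long and closed knots are equivalent, so that any quantity invariant under Reidemeister moves of Gauss diagrams applied away from the base point is automatically a knot invariant --- two based diagrams of the same knot with different base points represent isotopic long knots and are therefore already related by such moves. By missing this, you have manufactured an extra obstacle and then left its central step unproved: you explicitly defer ``the hardest step,'' namely that the signed sum of the quantities $V_S$ over the affected states collapses to $\pm\s(\beta)\,c_{n-1}(K)$. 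That collapse is not a routine telescoping; it is an identity of the same nature as the inductive skein argument in Theorem \ref{thm:basepoint-independence}, and nothing in your sketch supplies it.

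Moreover, the intermediate reduction you rely on is imprecise. Whether a homomorphism $\phi$ from an ascending diagram with two boundary components survives the base-point move is governed by the full state $S(\phi)$ (the image of \emph{all} arrows), not merely by whether the separating state contains $\beta$: the ascending condition is read off from the boundary tracing, which turns at every arrow of $S(\phi)$, so moving the base point across an endpoint of $\beta$ affects exactly those $\phi$ with $\beta\in S(\phi)$, whether $\beta$ maps a separating arrow or not. Your claim that the move ``exchanges the valid ascending and descending labelings on every separating state whose arrow set contains $\beta$'' also does not match the mechanism of Theorem \ref{thm:independence-basepoint2}, where the ascending/descending swap is governed by the label ($1$ or $2$) of the arc carrying the new base point, not by membership of a particular arrow; and that argument only controls the sum $A^2_{n,m}+D^2_{n,m}$, not $A^2_{n,1}$ alone, which (as the trefoil example in the paper shows) genuinely depends on the base point. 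So as written your proof is incomplete; replacing the whole third part by the long-knots-equal-closed-knots observation both closes the gap and shortens the argument to the paper's.
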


These invariants will be denoted by $I_{A,n}(K)$ and $I_{D,n}(K)$ respectively.

\begin{proof} We will prove the invariance of $I_{A,n}(G)$;
the proof for $I_{D,n}(G)$ is the same.

A well-known fact in knot theory is that for classical knots, theories of
closed and long knots are equivalent. Thus it suffices to prove the
invariance of $I_{A,n}(G)$ under Reidemeister moves $\Omega_1$ --
$\Omega_3$ applied away from the base point. Note that both $w_A(G)$
and $A^2_{n,1}(G)$ are invariant under $\Omega_2$ and $\Omega_3$
(see Lemma \ref{thm:invariance-2-3}). It remains to prove the invariance
of $I_{A,n}(G)$ under $\Omega_1$. Let $G$ and $\tG$ be two based Gauss diagrams that differ by an
application of $\Omega_1$, so that $\tG$ has an additional isolated arrow
$\a$ either as in Figure \ref{fig:O1}a, or as in Figure \ref{fig:O1}b.

\begin{figure}[htb]
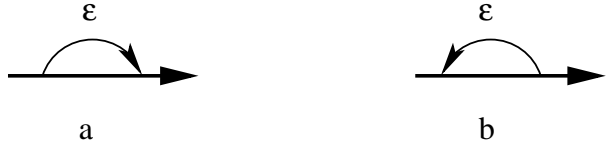

\begin{equation*}
\ris{-8}{-3}{80}{Omega_1}{-1.1}
\end{equation*}
\caption{\label{fig:O1} Two versions of the Reidemeister move $\Omega_1$.}
\end{figure}

In the first case, $A^2_{n,1}(\tG)=A^2_{n,1}(G)$ and $w_A(\tG)=w_A(G)$,
thus we have $I_{A,n}(G)=I_{A,n}(\tG)$. In the second case, by Corollary \ref{cor:Chmutov-Conway} we get
$$A^2_{n,1}(\tG)=A^2_{n,1}(G)+\eps c_{n-1}(K).$$
We also have $w_A(\tG)=w_A(G)+\eps$, and thus again $I_{A,n}(G)=I_{A,n}(\tG)$.
\end{proof}

Note that for every $G$ we have $w(G)=w_A(G)+w_D(G)$; also, for knots
one has $I_n(K)=p_n(K)$.
\begin{cor} For every knot $K$ we have $I_n(K)=I_{A,n}(K)+I_{D,n}(K)=p_n(K)$.
\end{cor}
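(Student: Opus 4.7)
The plan is to derive the corollary directly from two ingredients the paper has already made available just before its statement: the writhe decomposition $w(G)=w_A(G)+w_D(G)$ and the identification $I_n(L) = p_n(L)-C_n(L)$ from Theorem~\ref{thm:identification}. No new geometric argument is required; the corollary is essentially a bookkeeping consequence.

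For the first equality $I_n(K)=I_{A,n}(K)+I_{D,n}(K)$, I would specialize the definition of $I_n$ to a knot (so $m=1$) and unfold: $I_n(K)=A^2_{n,1}(G)+D^2_{n,1}(G)-w(G)c_{n-1}(K)$. Then, using the remark that every arrow of $G$ is traversed either first at its head or first at its tail (and the base point passage hits each arrow exactly once in each of the two possible orders), every arrow contributes its sign to exactly one of $w_A(G)$, $w_D(G)$, giving $w(G)=w_A(G)+w_D(G)$. Adding the definitions of $I_{A,n}$ and $I_{D,n}$ then yields the claim on the nose.

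For the second equality $I_n(K)=p_n(K)$, I would invoke Theorem~\ref{thm:identification}, which states
\[
I_n(L)=p_n(L)-C_n(L),\qquad C_n(L)=\sum_{L'\subset L}\sum_{i=0}^{n}c_i(L')c_{n-i}(L\sm L'),
\]
with the outer sum over proper sublinks of $L$. For a knot $L=K$ there is no proper (nonempty, strict) sublink, so $C_n(K)=0$, and hence $I_n(K)=p_n(K)$.

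Combining the two equalities finishes the corollary. There is no genuine obstacle here: the only point to be careful about is the conventional reading of ``proper sublink'' in the definition of $C_n$, which is fixed by the computation $C_0(O_2)=2$ that matches $p_0(O_2)=2$ in the preceding example, and which under that convention forces $C_n(K)\equiv 0$ for a one-component $K$.
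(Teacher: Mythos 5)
Your proposal is correct and matches the paper's intended argument: the paper proves the corollary exactly via the preceding remark $w(G)=w_A(G)+w_D(G)$ (so adding the definitions of $I_{A,n}$ and $I_{D,n}$ reproduces $I_n$) together with Theorem \ref{thm:identification}, where $C_n(K)=0$ for a knot since there are no proper sublinks. Your consistency check of the ``proper sublink'' convention against $C_0(O_2)=2$ is a nice touch but not a departure from the paper's route.
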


\subsection{Irreducible arrow diagrams}

In this subsection we define a modification of link invariants considered in Section \ref{sec-1-comp}. This modification allows us to reduce significantly the number of diagrams in formulas for link invariants by using a special type of arrow diagrams -- so called irreducible diagrams.

\begin{defn}
\rm{An arrow diagram $A$ is called \textit{irreducible} if after the removal of any arrow in $A$ the remaining graph is connected. Otherwise, $A$ is called \textit{reducible}.}
\end{defn}

\begin{ex}
\rm{Irreducible diagrams in $\mathcal{A}_{3,2}$ are shown in Figure \ref{fig:A32}a, and reducible diagrams are shown in Figure \ref{fig:A32}b.}
\end{ex}
\begin{figure}[htb]
\centerline{\includegraphics[height=0.60in]{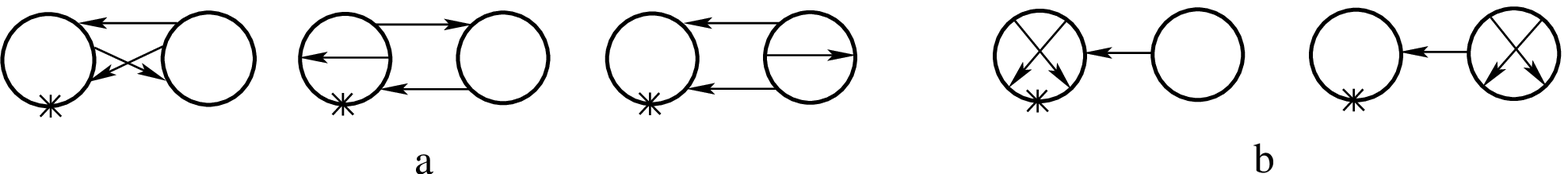}}
\caption{\label{fig:A32}}
\end{figure}

Denote by $\mathcal{AI}r_{n,m}\subset\mathcal{A}_{n,m}$ sets of all irreducible ascending diagrams with $m$ circles, $n$ arrows, and
one boundary component. Descending diagrams of the same types we will denote using $\mathcal{D}$ instead of $\mathcal{A}$. Let $G$ be a Gauss diagram of an $m$-component link $L$. Define $\mathcal{AI}r_{n,m}(G)$ and $\mathcal{DI}r_{n,m}(G)$ by
$$\mathcal{AI}r_{n,m}(G):=\sum_{A\in\mathcal{AI}r_{n,m}}\langle A,G\rangle\quad\quad
\mathcal{DI}r_{n,m}(G):=\sum_{A\in\mathcal{DI}r_{n,m}}\langle A,G\rangle.$$

\begin{thm}
Let $G$ be a Gauss diagram of an $m$-component link $L$. Then both $\mathcal{AI}r_{n,m}(G)$ and $\mathcal{DI}r_{n,m}(G)$ are invariants of an underlying link $L$. Moreover,
$$\mathcal{AI}r_{n,m}(G)=\mathcal{DI}r_{n,m}(G).$$
\end{thm}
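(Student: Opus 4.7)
The strategy is to prove (i) that each of $\mathcal{AI}r_{n,m}(G)$ and $\mathcal{DI}r_{n,m}(G)$ is independent of the choice of Gauss diagram $G$ of $L$, and (ii) that the two invariants are equal. Part (i) is achieved by revisiting the proofs of Theorem~\ref{thm:Reidemeister-basepoint} (Reidemeister invariance) and, where relevant, Theorem~\ref{thm:basepoint-independence} (basepoint independence). Part (ii) is reduced, via Corollary~\ref{cor:Chmutov-Conway}, to an equality between the sums over \emph{reducible} ascending and descending diagrams, and is then proved by induction on $n$ using a bridge decomposition.

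For (i), I would follow the proof of Theorem~\ref{thm:Reidemeister-basepoint} case by case. In each of the $\Omega_1,\Omega_2,\Omega_3$ moves, the proof proceeds via a sign-preserving bijection between the ascending (respectively descending) states of $G$ and those of $\tG$, together with pairwise sign cancellations for $\Omega_2$ and for most of the $\Omega_3$ cases. In every one of those bijections, the paired states give rise to arrow diagrams whose underlying \emph{graphs} coincide: they differ at most by the direction of an arrow introduced by $\Omega_2$, or by the cyclic positioning of endpoints inside the $\Omega_3$ fragment. Since irreducibility is determined solely by the underlying graph of the arrow diagram, each such bijection restricts to a sign-preserving bijection between irreducible ascending (descending) states, so the same counting argument proves $\mathcal{AI}r_{n,m}(G)=\mathcal{AI}r_{n,m}(\tG)$ and likewise for $\mathcal{DI}r_{n,m}$.

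For (ii), Corollary~\ref{cor:Chmutov-Conway} gives $A_{n,m}(G)=D_{n,m}(G)=c_n(L)$, so it suffices to show
$$R^A_{n,m}(G):=\sum_{A\in\mathcal{A}_{n,m}\sm\mathcal{AI}r_{n,m}}\AG=\sum_{A\in\mathcal{D}_{n,m}\sm\mathcal{DI}r_{n,m}}\AG=:R^D_{n,m}(G).$$
I would prove this by induction on $n$. If $A$ is a reducible ascending arrow diagram with one boundary component and $\a$ is a bridge of $A$, then $\a$ joins two distinct circles of $A$ (a self-loop cannot be a bridge), and a traversal analysis of $\partial\S(A)$ shows that $A\sm\a=A_1\sqcup A_2$, where each $A_i$ is an ascending arrow diagram with one boundary component and with basepoint inherited canonically from the traversal; the descending case is entirely parallel. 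For $\phi:A\to G$ with $\phi(\a)=\a'$, the restrictions of $\phi$ to $A_1$ and $A_2$ factor through complementary subdiagrams of $G$, determined by how the circles of $G$ partition under $\phi$, so $\AG$ factors as a sum over $\a'\in G$ of products $\s(\a')\langle A_1,G^{(1)}\rangle\langle A_2,G^{(2)}\rangle$. Parameterizing $R^A_{n,m}(G)$ by the \emph{first} bridge crossed in the traversal from the basepoint --- whose near side is automatically $2$-edge-connected, hence irreducible --- yields a clean double sum, and the inductive hypothesis together with Corollary~\ref{cor:Chmutov-Conway} applied to $A_1,A_2$ shows that each such term has the same value in the ascending and descending versions. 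The common expression obtained is a polynomial in Conway coefficients of sublinks of $L$ arising from smoothing $G$ at~$\a'$, so it is automatically basepoint independent, giving the genuine link invariance in (i) together with the equality in (ii).

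The main obstacle is the combinatorial bookkeeping in the bridge-decomposition step: one must select a canonical bridge to avoid overcounting (different reducible $A$'s may have several bridges), verify that for the chosen canonical bridge the near piece is automatically irreducible so that the induction closes on a strictly smaller invariant of the same type, and check that the inherited basepoint on the far piece together with the partition of $G$'s circles are compatible with both the ascending/descending structure and with the smaller pairings $\langle A_i,G^{(i)}\rangle$ that appear in the decomposition. Once this bookkeeping is settled, the two reducible sums collapse term by term onto identical combinations of Conway coefficients of sublinks of $L$, proving the theorem.
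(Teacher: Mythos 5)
Your part (i) rests on the assertion that each bijection in the proof of Theorem \ref{thm:Reidemeister-basepoint} matches states whose underlying graphs coincide, so that irreducibility is preserved; this is false for $\Omega_3$. The three arrows $\a_t,\a_l,\a_r$ of the $\Omega_3$ fragment join three \emph{different} pairs of strands, and the table in that proof matches two-arrow states across different pairs (e.g. $S\cup\a_l\cup\a_r$ of $G$ corresponds to $S\cup\a_t\cup\a_r$ or $S\cup\a_t\cup\a_l$ of $\tG$). When the three strands lie on three distinct circles, the induced graph on the circles changes (for instance a ``path'' configuration, which has a bridge, can correspond to a ``triangle'' configuration, which has none), so there is no reason the bijection restricts to irreducible states, and your claimed justification fails. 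The paper never attempts a move-by-move argument for the irreducible sums: instead it computes the \emph{reducible} contribution and identifies it with a manifestly invariant expression, then subtracts it from $A_{n,m}(G)=c_n(L)=D_{n,m}(G)$ using Corollary \ref{cor:Chmutov-Conway}, obtaining invariance and the equality $\mathcal{AI}r_{n,m}(G)=\mathcal{DI}r_{n,m}(G)$ simultaneously. Your part (ii) is exactly this route in spirit, and for $m=2$ the paper carries it out cleanly: a reducible ascending (resp. descending) diagram with two circles and one boundary component has exactly one arrow joining the circles, that arrow must point from the non-based circle to the based one (resp. the reverse), and summing over its possible images $\a$ in $G$, with the base point of the second piece placed at the tail (resp. head) of $\a$, gives $\lk(L_1,L_2)\sum_{k}c_k(L_1)c_{n-k-1}(L_2)$ for both the ascending and descending reducible sums.

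However, the bookkeeping you flag as the main obstacle is genuinely not settled, and one of your supporting claims is wrong: the ``near side'' of the first bridge crossed in the traversal is \emph{not} automatically $2$-edge-connected. Take a based circle $X$ joined by one bridge to a circle $Y$ and by another bridge to a circle $Z$; whichever bridge the traversal meets first, the component of the complement containing the base point still contains the other bridge, so it is reducible and your induction does not close in the stated form. Moreover, for general $m$ the sum over reducible states does not factor over the choice of the bridge's image $\a'$ alone: the partition of the circles of $G$ into the two sides is determined by the whole state, not by $\a'$, so different states with the same first bridge induce different partitions and the ``clean double sum'' needs an additional summation over partitions together with a condition encoding ``first''. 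These are precisely the points the paper sidesteps by restricting the written proof to $m=2$ (where the pieces are forced to be the two components $L_1,L_2$) and leaving general $m$ to the reader; to make your argument complete you would have to supply this decomposition honestly, for example by inducting on the number of circles and splitting off one bridge together with a chosen partition of the remaining circles, rather than relying on the irreducibility of the near piece.
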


\begin{proof}
For the simplicity we prove this theorem in case of two-component links, i.e. $m=2$. The proof for general $m$ is very similar and is left to the reader.

Let $G$ be any Gauss diagram of a two-component classical link $L=L_1\cup L_2$, and let $A\in\A_{n,2}$ be an arrow diagram with exactly one arrow between two different circles. The set of such arrow diagrams is denoted by $\overleftarrow{\A}_{n,2}$. We denote the set of descending diagrams of the same type by $\overrightarrow{\D}_{n,2}$.
We set
$$\overleftarrow{A}_{n,2}(G):=\sum\limits_{A\in \overleftarrow{\A}_{n,2}}\langle  A, G\rangle\quad\quad \overrightarrow{D}_{n,2}(G):=\sum\limits_{A\in \overleftarrow{\D}_{n,2}}\langle  A, G\rangle.$$
Now we prove that

\begin{equation}\label{eq:reducible-2-comp}
\overleftarrow{A}_{n,2}(G)=\lk(L_1,L_2)\sum\limits_{k=0}^{n-1} c_k(L_1)c_{n-k-1}(L_2)=\overrightarrow{D}_{n,2}(G).
\end{equation}

We start with the case of ascending diagrams. Let $G_1$ and $G_2$ be diagrams obtained from $G$ by erasing arrows between circles of $G$. We denote by $\mathfrak{A}(G)$ a set of arrows which are oriented from the non-based circle of $G$ to the based one. Without loss of generality suppose that a base point $*$ of $G$ lies on $G_1$.  We pick $\a\in \mathfrak{A}(G)$ and erase all other arrows in $\mathfrak{A}(G)$. The remaining diagram is denoted by $G_\a$. We place on $G_2$ a base point $*_\a$ at the tail of $\a$. Then
$$\sum\limits_{A\in \overleftarrow{\A}_{n,2}}\overleftarrow{A}_{n,2}(G_\a)=\s(\a)\sum\limits_{k=0}^{n-1} A_{k,2}(G_1)A_{n-k-1,2}(G_2)=\s(\a)\sum\limits_{k=0}^{n-1} c_k(L_1)c_{n-k-1}(L_2),$$
where the last equality is by Corollary \ref{cor:Chmutov-Conway}. It follows that
$$\overleftarrow{A}_{n,2}(G)=\sum\limits_{\a\in\mathfrak{A}(G)}\sum\limits_{A\in \overleftarrow{\A}_{n,2}}\overleftarrow{A}_{n,2}(G_\a)=\lk(L_1,L_2)\sum\limits_{k=0}^{n-1} c_k(L_1)c_{n-k-1}(L_2).$$
In case of descending diagrams, we denote by $\mathfrak{D}(G)$ a set of arrows which are oriented from the based circle of $G$ to the non-based one. For $\a\in\mathfrak{D}(G)$ we place a base point $*_\a$ at the head of $\a$. Now we proceed as in the former case and the proof of \eqref{eq:reducible-2-comp} follows. Note that by definition $\mathcal{AI}r_{n,2}=\A_{n,2}\setminus\overleftarrow{\A}_{n,2}$ and $\mathcal{DI}r_{n,2}=\D_{n,2}\setminus\overrightarrow{\D}_{n,2}$. Now the proof follows immediately from Corollary~\ref{cor:Chmutov-Conway}.
\end{proof}

\bibliographystyle{alpha}

\bigskip

Max-Planck-Institut f$\ddot{\textrm{u}}$r Mathematik, 53111 Bonn, Germany\\
\emph{E-mail address:} \verb"brandem@mpim-bonn.mpg.de"

\end{document}